\def\3bar{{|\hspace{-.02in}|\hspace{-.02in}|}}
\newtheorem{theorem}{Theorem}[section]
\newtheorem{lemma}[theorem]{Lemma}
\theoremstyle{definition}
\theoremstyle{remark}
\numberwithin{equation}{section}
\begin{document}

\begin{frontmatter}



\title{Weak Galerkin finite element method for linear poroelasticity problems}

\author[label1,label2]{Shanshan Gu}
\author[label1]{Shimin Chai}
\ead{chaism@jlu.edu.cn }
\author[label4]{Chenguang Zhou\corref{cor1}}
\ead{zhoucg@bjut.edu.cn }
\cortext[cor1]{Corresponding author}
\address[label1]{School of Mathematics, Jilin University, Changchun, Jilin 130012, China.}
\address[label2]{State Key Laboratory of Polymer Physics and Chemistry, Changchun Institute of Applied Chemistry, Chinese Academy of Sciences, Changchun 13022, China.}
\address[label4]{Faculty of Science, Beijing University of Technology, Beijing 100124, China.}

\begin{abstract}
This paper is devoted to a weak Galerkin (WG) finite element method for linear poroelasticity problems where weakly defined divergence and gradient operators over discontinuous functions are introduced. We establish both the continuous and discrete time WG schemes, and obtain their optimal convergence order estimates in a discrete $H^1$ norm for the displacement and in an $H^1$ type and $L^2$ norms for the pressure. Finally, numerical experiments are presented to illustrate the theoretical error results in different kinds of meshes which shows the WG flexibility for mesh selections, and to verify the locking-free property of our proposed method.\\

\vspace{-1 ex}
\noindent $\textrm{\bf{Mathematics Subject Classification 2020}}$: 65M60, 65M15, 76S05
\end{abstract}

\begin{keyword}


Weak Galerkin \sep finite element method \sep linear poroelasticity problem \sep optimal pressure error estimate \sep locking-free property
\end{keyword}

\end{frontmatter}


\section{Introduction}\label{intro}

\indent In this paper, we consider the following two-field Navier-formed Biot's consolidation model which depicts a quasi-static flow in a saturated deformable poroelastic medium. Let $\Omega$ be a convex polygonal or polyhedral domain in $\mathbb{R}^d$ $(d=2, 3)$ with smooth boundary $\Gamma=\Gamma_{p, D}\cup\Gamma_{p, N}$, where $\Gamma_{p, D}$ is nonempty and $T$ is a final time. The displacement of porous solid media $\bm{u}(t):\Omega\to \mathbb{R}^d$ and the pore pressure of fluid $p(t):\Omega\to\mathbb{R}$ satisfy the system
\begin{align}
-(\lambda+\mu)\nabla(\nabla\cdot\bm{u})-\mu\triangle\bm{u}+\alpha\nabla p &= \bm{f}, \quad \textrm{in} \ \Omega, \ t\in (0, T],\label{eq:5}\\
\frac{\partial}{\partial t}(c_0 p+\alpha\nabla\cdot\bm{u})-\nabla\cdot(\kappa\nabla p) &= g, \quad \textrm{in} \ \Omega, \ t\in (0, T],\label{eq:6}
\end{align}
with the boundary conditions
\begin{align}
\bm{u} &= \bm{0}, \quad \textrm{on} \ \Gamma,\label{DirichletU}\\
p &= 0, \quad \textrm{on} \ \Gamma_{p, D},\label{DirichletP}\\
\kappa\nabla p\cdot\bm{n} &= \gamma, \quad \textrm{on} \ \Gamma_{p, N},\notag
\end{align}
and the initial conditions
\begin{align}
\bm{u}(\cdot, 0) &= \bm{u}^0,\quad \textrm{in} \ \Omega, \label{eq:1}\\
p(\cdot, 0) &= p^0,\quad \textrm{in} \ \Omega. \label{eq:2}
\end{align}
Here $\bm{f}(t):\Omega\to\mathbb{R}^d$ is the body force, $g(t):\Omega\to\mathbb{R}$ is the volumetric fluid source (or sink), $\bm{\beta}(t):\Omega\to\mathbb{R}^d$ represents the prescribed surface traction, and $\gamma(t):\Omega\to\mathbb{R}$ states the prescribed discharge on the boundary. $\varepsilon(\bm{u})=\frac{1}{2}(\nabla\bm{u}+\nabla\bm{u}^{\textrm{T}})$ stands for the strain tensor, $\lambda$ and $\mu$ are the Lam\'{e} constants, $\alpha$ is the Biot-Willis parameter, $c_0 \geq 0$ represents the constrained specific storage coefficient, $\kappa$ denotes the hydraulic conductivity. $I$ states the identity tensor, and $\bm{n}$ is the unit outward normal vector. In this paper, for simplicity, we assume that $\alpha = 1$. If $\alpha \neq 1$, one may reduce the model to the one with $\alpha = 1$ by rescaling the equations.\\

\indent As one of the classic poroelasticity problems, the Biot's consolidation model has been treated by various numerical methods, such as finite element methods, finite difference methods, hybrid discontinuous Galerkin (HDG) methods, hybrid high-order (HHO) methods and WG methods. In \cite{MR3874793}, the authors present a finite element discretization preserving pointwise mass balance for the Biot's consolidation model and provide numerical results to demonstrate the method. In \cite{MR4153035}, Chen and Yang design a three-variable weak form with mixed finite element and derive optimal convergence order estimates. Stability estimates and convergence analysis of finite difference methods for the Biot's consolidation model are presented in \cite{MR1957690}. In \cite{MR2371348}, the authors deal with the numerical solution of a secondary consolidation Biot model, and a family of finite difference methods on staggered grids in both time and spatial variables is considered. Fu employs high order HDG methods for the Biot' consolidation equations and backward Euler methods are used for temporal discretization \cite{MR3907413}. In \cite{MR3504993}, the authors discretize the displacements describing the elastic deformation by the HHO method \cite{MR3283758}, and the pressure representing the flow problem by the symmetric weighted interior penalty discontinuous Galerkin method \cite{MR2383212}. In \cite{MR3775100}, the authors adopt WG linear finite elements for spatial discretzation and backward Euler scheme for temporal discretization in order to obtain an implicit fully discretized scheme of the Biot's consolidation model. The results of \cite{MR3775100} are generalized to high order elements in \cite{ar53}, and degrees of freedom are reduced on element boundaries for the pressure approximation without compromising the accuracy. In \cite{MR3858071}, the authors apply a modified WG method to the Biot's problem and derive the error estimates of semi-discrete and fully discrete schemes.\\

\indent Generally, WG methods, first proposed and analyzed in \cite{article4}, refers to finite element techniques for solving partial differential equations where differential operators (e.g., gradient, divergence, curl, etc.) are approximated by weak forms as distributions. Later WG is successfully extended to elliptic interface problems \cite{article6, MR3546841}, Helmholtz equations \cite{ar55, ar54}, linear parabolic equations \cite{article12, article31, article32}, and further developed for other applications, such as biharmonic problems \cite{article36, MR4048637}, Stokes problems \cite{MR4399131, ar58}, Stokes-Darcy problems \cite{MR4316143, MR4126527} and stochastic partial differential equations \cite{MR3858074, MR4021050}, etc. The idea of parameter free stabilization term is introduced in \cite{article7} to improve the flexibility of element construction and mesh generation. The resulting WG method is no longer limited to the RT \cite{article10} or BDM \cite{article18} elements in the computation of discrete weak gradient. In analogy with discrete weak gradient, discrete weak divergence is introduced in \cite{article11} where the proposed weak Galerkin mixed finite element method (WGMFEM) is applicable for general finite element partitions consisting of shape regular polygons in 2D or polyhedra in 3D. Additionally, WGMFEM is developed in second-order elliptic equations with Robin boundary conditions \cite{article41}, heat equations \cite{article23, MR4062237}, Helmholtz equations with large wave numbers \cite{ar62}, and quasi-linear poroelasticity problems \cite{MR4136610}, etc.\\

\indent To our best knowledge, up to now, there have been two papers in total, i.e., \cite{MR3775100, ar53}, studying the linear two-field Biot model from the point of WG discretization. In \cite{MR3775100, ar53}, the authors adopt the piecewise polynomials with the same degree to discretize the displacement and pressure in the interior of elements, which causes that they only obtain the suboptimal error convergence rates for the pressure theoretically. In addition, the authors of both papers consider only the case of spurious pressure oscillations in the numerical experiments, not the locking problem.\\

\indent Based on above, in this paper, we propose a WG method for the linear two-field (displacement and pressure) Biot's consolidation model in the Navier form and set up the continuous and discrete time WG schemes. We respectively design $[P_{j+1}]^d$-$[P_j]^d$ and $P_j$-$P_{j-1}$ $(d=2, 3 \textrm{ and } j \geq 1)$ WG combinations to gain the displacement and pressure approximations. With the use of these combinations satisfying the discrete$\textrm{ inf-sup }$condition, we derive the optimal order error estimates of the semi-discrete and fully discrete schemes in a discrete $H^1$ norm for the displacement and in an $H^1$ type and $L^2$ norms for the pressure. Finally, some numerical examples are supplied to illustrate the advantages of our proposed method from the two aspects, the good mesh flexibility and locking-free property for the system \eqref{eq:5}-\eqref{eq:2}.\\

\indent The outline of this paper goes as follows. In Section \ref{MathematicalModelAndItsVariationalFormulation}, we establish the weak formulation based on some necessary notations and definitions. The specific WG method is introduced in Section \ref{WeakGalerkinMethod} and we provide the semi-discrete and fully discrete numerical schemes. In Section \ref{ErrorAnalysis}, the optimal order convergence estimates of two numerical schemes are derived, and ultimately in Section \ref{NumericalExperiments}, we supply numerical experiments to validate our theoretical findings and expectation.\\

\section{Notations and variational formulation}\label{MathematicalModelAndItsVariationalFormulation}

\indent In this section, before bringing in the variational formulation of \eqref{eq:5} and \eqref{eq:6}, we firstly present some useful notations and definitions. In this paper, we utilize the standard definition of Sobolev space $H^{s}(\Omega)$ with $s\geq 0$ (cf. \cite{MR0450957}). The associated inner-product and norm in $H^s(\Omega)$ are denoted by $(\cdot, \cdot)_s$ and $\|\cdot\|_s$, respectively. When $s=0$, $H^0(\Omega)$ coincides with the space of square-integrable functions $L^2(\Omega)$. In this case, the subscript $s$ is suppressed from the notation of inner product and norm. The above notations and definitions can easily be extended to vector-valued functions. The inner-product and norm for such functions shall follow the same naming convention. We also define two spaces
\begin{equation*}
\bm{H}^1_{\bm{u}}(\Omega) := \{\bm{v} \in [H^1(\Omega)]^d,\ \bm{v}=\bm{0}\ \textrm{on } \Gamma\},
\end{equation*}
and
\begin{equation*}
H^1_{p}(\Omega) := \{q \in H^1(\Omega),\ q=0\ \textrm{on } \Gamma_{p, D}\}.
\end{equation*}
In addition, the letter $C$ (with or without subscripts) denotes a generic positive constant which may be different at its different occurrences throughout this paper.\\

\indent Now, we can define the variational equations of \eqref{eq:5} and \eqref{eq:6} as follows: For any $t\in(0, T]$, seek $\bm{u}(t)\in \bm{H}^1_{\bm{u}}(\Omega)$ and $p(t)\in H^1_p(\Omega)$ such that
\begin{align*}
(\lambda+\mu)(\nabla\cdot\bm{u}, \nabla\cdot\bm{v})+\mu(\nabla\bm{u}, \nabla\bm{v})-(\nabla\cdot\bm{v}, p) &= (\bm{f}, \bm{v}), \quad \forall\ \bm{v}\in \bm{H}^1_{\bm{u}}(\Omega),\\
(\frac{\partial}{\partial t}(c_0 p+\nabla\cdot\bm{u}), q)+(\kappa\nabla p, \nabla q) &= (g, q)+\langle \gamma, q\rangle_{\Gamma_{p, N}}, \quad \forall\ q \in H_p^1(\Omega),
\end{align*}
with the initial conditions \eqref{eq:1} and \eqref{eq:2}.\\

\section{WG method}\label{WeakGalerkinMethod}

\indent In this section, the definitions of discrete weak divergence and weak gradient operators are firstly rendered. The key to the WG method is to use discrete weak differential operators in place of standard differential operators in the variational form of the original system \eqref{eq:5} and \eqref{eq:6}. Then we supply the semi-discrete and fully discrete WG schemes used for our error analysis and numerical computation.\\

\indent Let $\mathcal{T}_h$ be a finite element partition of the domain $\Omega\subset\mathbb{R}^d$ consisting of polygons $(d=2)$ or polyhedra $(d=3)$ satisfying the shape regularity requirements A1-A4 in \cite{article11}. Denote by $h_K$ the partition diameter of the element $K\in\mathcal{T}_h$ with boundary $\partial K$, and $h=\max\limits_{K}h_K$. Let $\mathcal{E}_h$ be the set of all edges or faces in $\mathcal{T}_h$, and $\mathcal{E}_h^0=\mathcal{E}_h\backslash\Gamma$ be the set of all interior edges or faces. The sets of polynomials with degree no more than $j$ on each $K$ and $e\in \mathcal{E}_h$ are denoted by $P_j(K)$ and $P_j(e)$, respectively.\\

\indent For the displacement $\bm{u}$, we define two weak vector-valued finite element spaces as, for any integer $j \geq 1$,
\begin{equation*}
\bm{V}_h := \{\bm{v}_{h}=\{\bm{v}_0, \bm{v}_b\}: \{\bm{v}_0, \bm{v}_b\}|_K\in[P_{j+1}(K)]^d \times [P_j(e)]^d, \ K\in\mathcal{T}_h,\ e\subset \partial K\},
\end{equation*}
and
\begin{equation*}
\bm{V}_h^{\bm{u}} := \{\bm{v}_{h}=\{\bm{v}_0, \bm{v}_b\}\in \bm{V}_h: \bm{v}_b=\bm{0}\ \textrm{on } \Gamma\}.
\end{equation*}
Based on the above definitions of spaces, we bring in discrete weak divergence and weak gradient operators. For $\bm{v}_h\in \bm{V}_h$, define $\nabla_{w,K}\cdot\bm{v}_h\in P_j(K)$ and $\nabla_{w,K}\bm{v}_h\in[P_j(K)]^{d\times d}$ on each element $K$ as follows,
\begin{align*}
(\nabla_{w,K}\cdot\bm{v}_h, \psi)_K &= -(\bm{v}_0, \nabla\psi)_K+\langle\bm{v}_b\cdot\bm{n}, \psi\rangle_{\partial K}, \quad \forall\ \psi\in P_j(K),\\
(\nabla_{w,K}\bm{v}_h, \phi)_K &= -(\bm{v}_0, \nabla\cdot\phi)_K+\langle\bm{v}_b, \phi\cdot\bm{n}\rangle_{\partial K}, \quad \forall\ \phi\in[P_j(K)]^{d\times d}.
\end{align*}
Then define the global weak divergence and weak gradient by patching the local ones, i.e.,
\begin{align*}
(\nabla_w\cdot\bm{v}_h)|_K &= \nabla_{w,K}\cdot(\bm{v}_h|_K), \quad \forall\ \bm{v}_h\in \bm{V}_h,\\
(\nabla_w\bm{v}_h)|_K &= \nabla_{w,K}(\bm{v}_h|_K), \quad \forall\ \bm{v}_h\in \bm{V}_h.
\end{align*}

\indent Similarly, for the pressure $p$, we introduce its weak finite element spaces and discrete weak gradient operator. For any integer $j \geq 1$, define
\begin{equation*}
W_h := \{q_{h}=\{q_0, q_b\}: \{q_0, q_b\}|_K\in P_{j}(K) \times P_{j-1}(e), \ K\in\mathcal{T}_h,\ e\subset \partial K\},
\end{equation*}
and
\begin{equation*}
W_h^{p} := \{q_{h}=\{q_0, q_b\}\in W_h: q_b=0\ \textrm{on } \Gamma_{p,D}\}.
\end{equation*}
For $q_h\in W_h$, define $\nabla_{w,K}q_h\in[P_{j-1}(K)]^d$ on each element $K$ as follows,
\begin{equation*}
(\nabla_{w,K}q_h, \bm{\zeta})_K = -(q_0, \nabla\cdot\bm{\zeta})_K+\langle q_b, \bm{\zeta}\cdot\bm{n}\rangle_{\partial K}, \quad \forall\ \bm{\zeta}\in[P_{j-1}(K)]^d.
\end{equation*}
Then define the global weak gradient by patching the local ones, i.e.,
\begin{equation*}
(\nabla_w q_h)|_K = \nabla_{w,K}(q_h|_K), \quad \forall\ q_h\in W_h.
\end{equation*}

\indent Before numerical schemes, we first present several definitions of $L^2$ projection operator. For each $K \in \mathcal{T}_h$, denote by $\bm{Q}_0$ the $L^2$ projection operator from $[L^2(K)]^d$ onto $[P_{j+1}(K)]^d$, by $Q_0$ the $L^2$ projection operator from $L^2(K)$ onto $P_j(K)$. For each $e\in\mathcal{E}_h$, denote by $\bm{Q}_b$ the $L^2$ projection operator from $[L^2(e)]^d$ onto $[P_j(e)]^d$, by $Q_b$ the $L^2$ projection operator from $L^2(e)$ onto $P_{j-1}(e)$. We shall combine $\bm{Q}_0$ with $\bm{Q}_b$, and $Q_0$ with $Q_b$, by writing $\bm{Q}_h=\{\bm{Q}_0, \bm{Q}_b\}$ and $Q_h=\{Q_0, Q_b\}$, respectively.\\

\indent Next we introduce several bilinear forms as follows: For $\bm{v}_h=\{\bm{v}_0, \bm{v}_b\}\in\bm{V}_h$, $\bm{w}_h=\{\bm{w}_0, \bm{w}_b\}\in \bm{V}_h$, $q_h=\{q_0, q_b\}\in W_h$, $\eta_h=\{\eta_0, \eta_b\}\in W_h$,
\begin{align*}
s_{\bm{u}}(\bm{v}_h,\bm{w}_h) &= \sum_{K\in\mathcal{T}_h}h_K^{-1}\langle \bm{Q}_b\bm{v}_0-\bm{v}_b, \bm{Q}_b\bm{w}_0-\bm{w}_b\rangle_{\partial K},\\
s_p(q_h,\eta_h) &= \sum_{K\in\mathcal{T}_h}h_K^{-1}\langle Q_bq_0-q_b, Q_b\eta_0-\eta_b\rangle_{\partial K},\\
a_{\bm{u}}(\bm{v}_h, \bm{w}_h) &= \sum_{K\in\mathcal{T}_h}(\lambda+\mu)(\nabla_{w}\cdot\bm{v}_h, \nabla_w\cdot\bm{w}_h)_K+\sum_{K\in\mathcal{T}_h}\mu(\nabla_w\bm{v}_h, \nabla_w\bm{w}_h)_K+s_{\bm{u}}(\bm{v}_h, \bm{w}_h),\\
a_p(q_h, \eta_h) &= \sum_{K\in\mathcal{T}_h}\kappa(\nabla_wq_h, \nabla_w\eta_h)_K+s_p(q_h, \eta_h),\\
b(\bm{v}_h, q_h) &= \sum_{K\in\mathcal{T}_h}(\nabla_w\cdot\bm{v}_h, q_0)_K.
\end{align*}
We also define two norms for $\bm{v}_h\in\bm{V}_h^{\bm{u}}$ and $q_h\in W_h^p$ by
\begin{align*}
|\!|\!|\bm{v}_h|\!|\!|_{\bm{V}} &:= \{ a_{\bm{u}}(\bm{v}_h, \bm{v}_h)\}^{\frac{1}{2}},\\
|\!|\!| q_h|\!|\!|_W &:= \{ a_p(q_h, q_h)\}^{\frac{1}{2}}.
\end{align*}
From \cite{MR3858071}, we know that the bilinear forms $a_{\bm{u}}(\cdot, \cdot)$ and $a_p(\cdot, \cdot)$ are bounded, symmetric and coercive in $\bm{V}_h^{\bm{u}}$ and $W_h^p$, respectively. And from \cite{article38}, the bilinear form $b(\cdot, \cdot)$ is bounded in $\bm{V}_h^{\bm{u}}\times W_h^p$ and satisfies the inf-sup condition.\\

\indent Now we can formulate the semi-discrete WG scheme. For any $t\in(0,T]$, find $\bm{u}_h=\{\bm{u}_0,\bm{u}_b\}\in \bm{V}_h^{\bm{u}}$ and $p_h=\{ p_0, p_b\}\in W_h^p$ such that
\begin{align}
&a_{\bm{u}}(\bm{u}_h, \bm{v}_h)-b(\bm{v}_h, p_h) = (\bm{f}, \bm{v}_0), \quad \forall\ \bm{v}_h=\{\bm{v}_0, \bm{v}_b\}\in\bm{V}_h^{\bm{u}},\label{eq:3}\\
&(c_0p_{0,t},q_0)+b(\bm{u}_{h,t},q_h)+a_p(p_h,q_h) = (g,q_0)+\langle \gamma, q_b\rangle_{\Gamma_{p, N}}, \quad \forall\ q_h=\{q_0, q_b\}\in W_h^p,\label{eq:4}
\end{align}
where $(c_0p_{0,t},q_0)=\sum\limits_{K\in\mathcal{T}_h}(c_0p_{0,t},q_0)_K$. According to the properties of the bilinear forms $a_{\bm{u}}(\cdot, \cdot)$, $a_p(\cdot, \cdot)$ and $b(\cdot, \cdot)$, the solution of problem \eqref{eq:3} and \eqref{eq:4} exists and is unique.\\

\indent We turn our attention to the fully discrete numerical scheme. We introduce a time step size $\tau=\frac{T}{N}$ for some positive integer $N$ and $t_n=n\tau$ for $n = 0, 1, ..., N$. By $\bm{u}_h^n$ and $p_h^n$, we denote the approximation of $\bm{u}(t_n)$ and $p(t_n)$, respectively. We use the backward Euler method to approximate the time derivative in \eqref{eq:4}, and then the fully discrete scheme reads: For $n = 1, ..., N$, seek $\bm{u}_h^n=\{ \bm{u}_0^n, \bm{u}_b^n\}\in\bm{V}_h^{\bm{u}}$ and $p_h^n=\{p_0^n, p_b^n\}\in W_h^p$ such that
\begin{align}
&a_{\bm{u}}(\bm{u}_h^n, \bm{v}_h)-b(\bm{v}_h, p_h^n) = (\bm{f}(t_n), \bm{v}_0), \ \forall\ \bm{v}_h=\{\bm{v}_0, \bm{v}_b\}\in\bm{V}_h^{\bm{u}},\label{eq:21}\\
&(c_0\partial_\tau p_0^n, q_0)+b(\partial_\tau \bm{u}_h^n, q_h)+a_p(p_h^n, q_h) = (g(t_n), q_0)+\langle \gamma(t_n), q_b\rangle_{\Gamma_{p, N}}, \ \forall\ q_h=\{q_0, q_b\}\in W_h^p,\label{eq:22}
\end{align}
where $\partial_\tau p_0^n=\frac{p_0^{n}-p_0^{n-1}}{\tau}$ and $\partial_\tau \bm{u}_h^n=\frac{\bm{u}_h^n-\bm{u}_h^{n-1}}{\tau}$.\\

\section{Error analysis}\label{ErrorAnalysis}

\indent In this section, we shall derive the optimal order error estimates for both continuous and discrete time WG methods.\\

\subsection{Continuous time WG method}

\indent Firstly, we bring in two useful $L^2$ projection operators. In addition to the projection operators $\bm{Q}_h=\{\bm{Q}_0, \bm{Q}_b\}$ and $Q_h=\{Q_0, Q_b\}$ mentioned above, for each element $K\in \mathcal{T}_h$, let $\mathbb{Q}_0$ and $\bm{\widehat{\mathcal{Q}}}_0$ be two local $L^2$ projection operators onto $[P_{j-1}(K)]^d$ and $[P_j(K)]^{d\times d}$, respectively. Then we have the following lemma.
\begin{lemma}\cite{article38, MR3775100}\label{Lemma1}
For any $\bm{v}\in [H^1(\Omega)]^d$ and $q\in H^1(\Omega)$, we have the following commutative properties of projection operators.
\begin{align*}
\nabla_{w}\cdot(\bm{Q}_h\bm{v}) &= Q_0(\nabla\cdot\bm{v}),\\
\nabla_{w}(\bm{Q}_h\bm{v}) &= \bm{\widehat{\mathcal{Q}}}_0(\nabla\bm{v}),\\
\nabla_{w}(Q_hq) &= \mathbb{Q}_0(\nabla q).
\end{align*}
\end{lemma}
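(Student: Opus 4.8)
The plan is to establish each of the three commutative identities by testing the discrete weak operator against an arbitrary polynomial of the appropriate degree and showing it agrees with the right-hand side after applying the definitions of the weak operators together with the defining property of the $L^2$ projections. The key observation is that each weak operator is characterized by an integration-by-parts identity against test polynomials, so it suffices to verify that the claimed $L^2$ projection satisfies the same identity.

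First I would prove $\nabla_w\cdot(\bm{Q}_h\bm{v})=Q_0(\nabla\cdot\bm{v})$. For any $\psi\in P_j(K)$, I apply the definition of discrete weak divergence to $\bm{Q}_h\bm{v}=\{\bm{Q}_0\bm{v},\bm{Q}_b\bm{v}\}$, obtaining $(\nabla_{w,K}\cdot(\bm{Q}_h\bm{v}),\psi)_K=-(\bm{Q}_0\bm{v},\nabla\psi)_K+\langle\bm{Q}_b\bm{v}\cdot\bm{n},\psi\rangle_{\partial K}$. Since $\nabla\psi\in[P_{j-1}(K)]^d\subset[P_j(K)]^d$ and $\bm{Q}_0$ is the $L^2$ projection onto $[P_{j+1}(K)]^d$, the projection property gives $(\bm{Q}_0\bm{v},\nabla\psi)_K=(\bm{v},\nabla\psi)_K$; likewise $\psi|_{\partial K}\in P_j(e)$ and $\bm{Q}_b$ projects onto $[P_j(e)]^d$, so $\langle\bm{Q}_b\bm{v}\cdot\bm{n},\psi\rangle_{\partial K}=\langle\bm{v}\cdot\bm{n},\psi\rangle_{\partial K}$. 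Ordinary integration by parts then yields $-(\bm{v},\nabla\psi)_K+\langle\bm{v}\cdot\bm{n},\psi\rangle_{\partial K}=(\nabla\cdot\bm{v},\psi)_K=(Q_0(\nabla\cdot\bm{v}),\psi)_K$, where the last equality uses that $Q_0$ projects onto $P_j(K)$ and $\psi\in P_j(K)$. Since $\psi$ is arbitrary in $P_j(K)$ and both sides lie in $P_j(K)$, the identity follows.

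The remaining two identities follow by the same mechanism, with attention to the degrees. For $\nabla_w(\bm{Q}_h\bm{v})=\bm{\widehat{\mathcal{Q}}}_0(\nabla\bm{v})$, I test against $\phi\in[P_j(K)]^{d\times d}$ and use that $\nabla\cdot\phi\in[P_{j-1}(K)]^d$ is annihilated by $\bm{Q}_0-I$ and that $\phi\cdot\bm{n}|_{\partial K}$ is reproduced by $\bm{Q}_b$, then integrate by parts to reach $(\nabla\bm{v},\phi)_K=(\bm{\widehat{\mathcal{Q}}}_0(\nabla\bm{v}),\phi)_K$. For $\nabla_w(Q_hq)=\mathbb{Q}_0(\nabla q)$, I test against $\bm{\zeta}\in[P_{j-1}(K)]^d$, noting $\nabla\cdot\bm{\zeta}\in P_{j-2}(K)\subset P_j(K)$ so $Q_0$ reproduces it, and $\bm{\zeta}\cdot\bm{n}|_{\partial K}\in P_{j-1}(e)$ so $Q_b$ reproduces it, arriving at $(\nabla q,\bm{\zeta})_K=(\mathbb{Q}_0(\nabla q),\bm{\zeta})_K$. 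In each case I patch the local identities over all $K\in\mathcal{T}_h$ to obtain the global statement.

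I do not anticipate a genuine obstacle here, as the result is standard and rests entirely on matching polynomial degrees to the ranges of the projections; the only point requiring care is the \emph{bookkeeping of degrees}, namely verifying in each identity that the derivative of the test polynomial and its boundary trace both fall within the subspaces onto which $\bm{Q}_0,Q_0,\bm{Q}_b,Q_b$ project, so that the projection operators can be inserted or removed without error. Ensuring these inclusions hold for the chosen spaces $[P_{j+1}]^d$–$[P_j]^d$ and $P_j$–$P_{j-1}$ is what makes the commutativity exact rather than merely approximate.
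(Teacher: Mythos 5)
Your argument is correct and is exactly the standard proof of these commutativity identities (the paper itself only cites \cite{article38, MR3775100} for this lemma rather than proving it, and the proof given there is the same: test against a polynomial of the appropriate degree, insert or remove the $L^2$ projections using the degree inclusions, and integrate by parts). The only implicit point worth making explicit is that the boundary steps such as $\langle\bm{Q}_b\bm{v}\cdot\bm{n},\psi\rangle_{\partial K}=\langle\bm{v}\cdot\bm{n},\psi\rangle_{\partial K}$ use that $\bm{n}$ is constant on each flat edge or face $e$, so that $\psi\bm{n}|_e$ indeed lies in $[P_j(e)]^d$, the range of $\bm{Q}_b$.
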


\indent Based on these projection operators, the following results are presented as a preparation of error analysis.
\begin{lemma}\label{Lemma2}
The solution $\bm{u}$ and $p$ to the model problem \eqref{eq:5} and \eqref{eq:6} satisfies
\begin{align*}
&a_{\bm{u}}(\bm{Q}_h\bm{u}, \bm{v}_h)-b(\bm{v}_h, Q_hp) = (\bm{f}, \bm{v}_0)+s_{\bm{u}}(\bm{Q}_h\bm{u},\bm{v}_h)+l_1(\bm{u}, \bm{v}_h)+l_2(\bm{u},\bm{v}_h)-l_3(p, \bm{v}_h),\\
&(c_0Q_0p_t, q_0)+b(\bm{Q}_h\bm{u}_t, q_h)+a_p(Q_hp,q_h) = (g,q_0)+\langle \gamma, q_b\rangle_{\Gamma_{p,N}}+s_p(Q_hp,q_h)+l_4(p,q_h),
\end{align*}
for all $\bm{v}_h\in \bm{V}_h^{\bm{u}}$ and $q_h\in W_h^p$, where the linear functions $l_1$, $l_2$, $l_3$ and $l_4$ are defined as
\begin{align*}
l_1(\bm{u}, \bm{v}_h) &= \sum\limits_{K\in\mathcal{T}_h}(\lambda+\mu)\langle \nabla\cdot\bm{u}-Q_0(\nabla\cdot\bm{u}), (\bm{v}_0-\bm{v}_b)\cdot\bm{n}\rangle_{\partial K},\\
l_2(\bm{u}, \bm{v}_h) &= \sum\limits_{K\in\mathcal{T}_h}\mu\langle (\nabla\bm{u}-\bm{\widehat{\mathcal{Q}}}_0(\nabla\bm{u}))\cdot\bm{n}, \bm{v}_0-\bm{v}_b\rangle_{\partial K},\\
l_3(p, \bm{v}_h) &= \sum\limits_{K\in\mathcal{T}_h}\langle p-Q_0p, (\bm{v}_0-\bm{v}_b)\cdot\bm{n}\rangle_{\partial K},\\
l_4(p, q_h) &= \sum\limits_{K\in\mathcal{T}_h}\langle \kappa(\nabla p-\mathbb{Q}_0(\nabla p))\cdot\bm{n}, q_0-q_b\rangle_{\partial K}.
\end{align*}
\end{lemma}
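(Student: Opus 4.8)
The plan is to test the strong equations \eqref{eq:5} and \eqref{eq:6} against the interior components $\bm{v}_0$ and $q_0$ of arbitrary WG test functions, integrate by parts element-by-element, and then reassemble the resulting physical volume and boundary integrals into the weak bilinear forms $a_{\bm{u}}$, $a_p$, $b$, the stabilizers $s_{\bm{u}}$, $s_p$, and the consistency functionals $l_1,\dots,l_4$. The two engines of the reassembly are Lemma \ref{Lemma1}, which trades $\nabla_w\cdot\bm{Q}_h\bm{u}$, $\nabla_w\bm{Q}_h\bm{u}$, $\nabla_w Q_hp$ for $Q_0(\nabla\cdot\bm{u})$, $\bm{\widehat{\mathcal{Q}}}_0(\nabla\bm{u})$, $\mathbb{Q}_0(\nabla p)$, and the projection orthogonality $(Q_0f,w)_K=(f,w)_K$ valid whenever $w$ is a polynomial of admissible degree, which lets me delete a projection each time it meets such a $w$ (e.g. $\nabla\cdot\bm{v}_0\in P_j(K)$, $\nabla\bm{v}_0\in[P_j(K)]^{d\times d}$, $\nabla q_0\in[P_{j-1}(K)]^d$).

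For the displacement identity I would integrate $-(\lambda+\mu)(\nabla(\nabla\cdot\bm{u}),\bm{v}_0)_K$, $-\mu(\triangle\bm{u},\bm{v}_0)_K$, $(\nabla p,\bm{v}_0)_K$ by parts on each $K$. The three volume pieces are then converted by applying the definition of $\nabla_w\cdot\bm{v}_h$ (resp. $\nabla_w\bm{v}_h$) against the admissible test polynomial $Q_0(\nabla\cdot\bm{u})$ (resp. $\bm{\widehat{\mathcal{Q}}}_0(\nabla\bm{u})$, $Q_0p$) followed by one more integration by parts and a projection removal; for the divergence term this yields
\[
(\lambda+\mu)(\nabla_w\cdot\bm{Q}_h\bm{u},\nabla_w\cdot\bm{v}_h)_K=(\lambda+\mu)(\nabla\cdot\bm{u},\nabla\cdot\bm{v}_0)_K-(\lambda+\mu)\langle Q_0(\nabla\cdot\bm{u}),(\bm{v}_0-\bm{v}_b)\cdot\bm{n}\rangle_{\partial K},
\]
with entirely analogous identities for the $\mu$-term and for $b(\bm{v}_h,Q_hp)$. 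The decisive observation is that since $\bm{v}_b$ is single-valued on $\mathcal{E}_h^0$ and vanishes on $\Gamma$, while $\nabla\cdot\bm{u}$, $(\nabla\bm{u})\cdot\bm{n}$ and $p$ are single-valued there, the raw edge sums $\sum_K\langle\,\cdot\,,\bm{v}_b\cdot\bm{n}\rangle_{\partial K}$ telescope to zero; this permits replacing $\bm{v}_0$ by $\bm{v}_0-\bm{v}_b$ in the boundary integrals produced by the integration by parts, so that subtracting the projected remainders above leaves precisely $l_1$, $l_2$ and $-l_3$, written with $\nabla\cdot\bm{u}-Q_0(\nabla\cdot\bm{u})$, $\nabla\bm{u}-\bm{\widehat{\mathcal{Q}}}_0(\nabla\bm{u})$, $p-Q_0p$. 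Adding $s_{\bm{u}}(\bm{Q}_h\bm{u},\bm{v}_h)$ to both sides to complete $a_{\bm{u}}$ then gives the first equation.

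For the pressure identity the storage term $(c_0p_t,q_0)_K=(c_0Q_0p_t,q_0)_K$ (with $c_0$ constant) and the coupling term $\sum_K(\nabla\cdot\bm{u}_t,q_0)_K=b(\bm{Q}_h\bm{u}_t,q_h)$ follow immediately from orthogonality and Lemma \ref{Lemma1}, with no remainder. For the diffusion term I integrate $-(\nabla\cdot(\kappa\nabla p),q_0)_K$ by parts and repeat the weak-gradient manipulation against $\mathbb{Q}_0(\nabla p)$ to obtain $\sum_K\kappa(\nabla_w Q_hp,\nabla_w q_h)_K-l_4(p,q_h)$. Here the boundary flux $\kappa\nabla p\cdot\bm{n}$ must be split according to the decomposition of $\Gamma$: interior-edge contributions cancel against the single-valued $q_b$, the $\Gamma_{p,D}$ part vanishes because $q_b=0$ there, and the $\Gamma_{p,N}$ part invokes $\kappa\nabla p\cdot\bm{n}=\gamma$ to produce exactly $\langle\gamma,q_b\rangle_{\Gamma_{p,N}}$. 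Adding $s_p(Q_hp,q_h)$ completes $a_p$ and yields the second equation.

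The routine part is the termwise integration by parts together with the projection removals; the delicate part, and the main obstacle, is the edge bookkeeping — keeping rigorous track of which boundary integrals carry $\bm{v}_0$ versus $\bm{v}_b$ (respectively $q_0$ versus $q_b$), and using the assumed regularity of $\bm{u}$, $p$ and the continuity of their normal fluxes to either discard the $\bm{v}_b$/$q_b$ sums or convert them into prescribed data. Aligning these cancellations with the projection removals is exactly what recasts the raw remainders into the symmetric form $\langle\,\cdot-\text{projection},\,(\cdot)_0-(\cdot)_b\,\rangle_{\partial K}$ defining $l_1,\dots,l_4$. The pressure diffusion term is the single place where the boundary manipulation produces an external datum rather than a pure cancellation, so I would handle the $\Gamma_{p,D}/\Gamma_{p,N}$ splitting there with the most care.
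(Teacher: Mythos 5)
Your proposal is correct and follows essentially the same route as the paper: test the strong equations with $\bm{v}_0$ and $q_0$, integrate by parts elementwise, convert the volume terms via Lemma \ref{Lemma1} and projection orthogonality into the weak bilinear forms (your displayed identity is exactly the paper's intermediate equation for the divergence term), use single-valuedness of the fluxes and of $\bm{v}_b$, $q_b$ together with the boundary conditions to replace $\bm{v}_0$ by $\bm{v}_0-\bm{v}_b$ (resp. $q_0$ by $q_0-q_b$) and extract $\langle\gamma,q_b\rangle_{\Gamma_{p,N}}$, and finally add the stabilizers. Your treatment of the edge bookkeeping is in fact more explicit than the paper's, which compresses it into the phrase ``together with the boundary conditions.''
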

\begin{proof}
For $\bm{v}_h\in\bm{V}_h^{\bm{u}}$, together with Lemma \ref{Lemma1}, the definition of discrete weak divergence, integration by parts and the definition of $Q_0$, we acquire
\begin{align*}
&\sum\limits_{K\in\mathcal{T}_h}(\lambda+\mu)(\nabla_w\cdot(\bm{Q}_h\bm{u}), \nabla_w\cdot\bm{v}_h)_K\\
= &\sum\limits_{K\in\mathcal{T}_h}(\lambda+\mu)(Q_0(\nabla\cdot\bm{u}), \nabla_w\cdot\bm{v}_h)_K\\
= &\sum\limits_{K\in\mathcal{T}_h}\{-(\lambda+\mu)(\nabla(Q_0(\nabla\cdot\bm{u})), \bm{v}_0)_K+(\lambda+\mu)\langle Q_0(\nabla\cdot\bm{u}), \bm{v}_b\cdot\bm{n}\rangle_{\partial K}\}\\
= &\sum\limits_{K\in\mathcal{T}_h}\{(\lambda+\mu)(Q_0(\nabla\cdot\bm{u}), \nabla\cdot\bm{v}_0)_K-(\lambda+\mu)\langle Q_0(\nabla\cdot\bm{u}), (\bm{v}_0-\bm{v}_b)\cdot\bm{n}\rangle_{\partial K}\}\\
= &\sum\limits_{K\in\mathcal{T}_h}\{ (\lambda+\mu)(\nabla\cdot\bm{u}, \nabla\cdot\bm{v}_0)_K-(\lambda+\mu)\langle Q_0(\nabla\cdot\bm{u}), (\bm{v}_0-\bm{v}_b)\cdot\bm{n}\rangle_{\partial K}\},
\end{align*}
which implies that
\begin{equation}\label{eq:7}
\sum\limits_{K\in\mathcal{T}_h}(\lambda+\mu)(\nabla\cdot\bm{u}, \nabla\cdot\bm{v}_0)_K = \sum\limits_{K\in\mathcal{T}_h}(\lambda+\mu)(\nabla_w\cdot(\bm{Q}_h\bm{u}), \nabla_w\cdot\bm{v}_h)_K+\sum\limits_{K\in\mathcal{T}_h}(\lambda+\mu)\langle Q_0(\nabla\cdot\bm{u}), (\bm{v}_0-\bm{v}_b)\cdot\bm{n}\rangle_{\partial K}.
\end{equation}
According to Lemma \ref{Lemma1}, the definition of discrete weak gradient, integration by parts and the definition of $\bm{\widehat{\mathcal{Q}}}_0$, it follows that
\begin{align*}
\sum\limits_{K\in\mathcal{T}_h}\mu(\nabla_w(\bm{Q}_h\bm{u}), \nabla_w\bm{v}_h)_K &= \sum\limits_{K\in\mathcal{T}_h}\mu(\bm{\widehat{\mathcal{Q}}}_0(\nabla\bm{u}), \nabla_w\bm{v}_h)_K\\
&= \sum\limits_{K\in\mathcal{T}_h}\{-\mu(\nabla\cdot(\bm{\widehat{\mathcal{Q}}}_0(\nabla\bm{u})), \bm{v}_0)_K+\mu\langle \bm{\widehat{\mathcal{Q}}}_0(\nabla\bm{u})\cdot\bm{n}, \bm{v}_b\rangle_{\partial K}\}\\
&= \sum\limits_{K\in\mathcal{T}_h}\{\mu(\bm{\widehat{\mathcal{Q}}}_0(\nabla\bm{u}), \nabla\bm{v}_0)_K-\mu\langle \bm{\widehat{\mathcal{Q}}}_0(\nabla\bm{u})\cdot\bm{n}, \bm{v}_0-\bm{v}_b\rangle_{\partial K}\}\\
&= \sum\limits_{K\in\mathcal{T}_h}\{\mu(\nabla\bm{u}, \nabla\bm{v}_0)_K-\mu\langle \bm{\widehat{\mathcal{Q}}}_0(\nabla\bm{u})\cdot\bm{n}, \bm{v}_0-\bm{v}_b\rangle_{\partial K}\},
\end{align*}
which shows that
\begin{equation}\label{eq:8}
\sum\limits_{K\in\mathcal{T}_h}\mu(\nabla\bm{u}, \nabla\bm{v}_0)_K = \sum\limits_{K\in\mathcal{T}_h}\mu(\nabla_w(\bm{Q}_h\bm{u}), \nabla_w\bm{v}_h)_K+\sum\limits_{K\in\mathcal{T}_h}\mu\langle \bm{\widehat{\mathcal{Q}}}_0(\nabla\bm{u})\cdot\bm{n}, \bm{v}_0-\bm{v}_b\rangle_{\partial K}.
\end{equation}
Because of the definition of discrete weak divergence, integration by parts and the definition of $Q_0$, we have
\begin{align*}
\sum\limits_{K\in\mathcal{T}_h}(\nabla_w\cdot\bm{v}_h, Q_0p)_K &= \sum\limits_{K\in\mathcal{T}_h}\{-(\bm{v}_0, \nabla(Q_0p))_K+\langle \bm{v}_b\cdot\bm{n}, Q_0p\rangle_{\partial K}\}\\
&= \sum\limits_{K\in\mathcal{T}_h}\{(\nabla\cdot\bm{v}_0, Q_0p)_K-\langle (\bm{v}_0-\bm{v}_b)\cdot\bm{n}, Q_0p\rangle_{\partial K}\}\\
&= \sum\limits_{K\in\mathcal{T}_h}\{(\nabla\cdot\bm{v}_0, p)_K-\langle (\bm{v}_0-\bm{v}_b)\cdot\bm{n}, Q_0p\rangle_{\partial K}\},
\end{align*}
which leads to
\begin{equation}\label{eq:9}
\sum\limits_{K\in\mathcal{T}_h}(\nabla\cdot\bm{v}_0, p)_K = \sum\limits_{K\in\mathcal{T}_h}(\nabla_w\cdot\bm{v}_h, Q_0p)_K+\sum\limits_{K\in\mathcal{T}_h}\langle (\bm{v}_0-\bm{v}_b)\cdot\bm{n}, Q_0p\rangle_{\partial K}.
\end{equation}
Now testing \eqref{eq:5} with $\bm{v}_0$ in $\bm{v}_h=\{\bm{v}_0, \bm{v}_b\}$ and using integration by parts, we obtain
\begin{equation}\label{eq:10}
\begin{split}
&\sum\limits_{K\in\mathcal{T}_h}(\lambda+\mu)(\nabla\cdot\bm{u}, \nabla\cdot\bm{v}_0)_K-\sum\limits_{K\in\mathcal{T}_h}(\lambda+\mu)\langle \nabla\cdot\bm{u}, \bm{v}_0\cdot\bm{n}\rangle_{\partial K}+\sum\limits_{K\in\mathcal{T}_h}\mu(\nabla\bm{u},\nabla\bm{v}_0)_K\\
&-\sum\limits_{K\in\mathcal{T}_h}\mu\langle \nabla\bm{u}\cdot\bm{n}, \bm{v}_0\rangle_{\partial K}-\sum\limits_{K\in\mathcal{T}_h}(p,\nabla\cdot\bm{v}_0)_K+\sum\limits_{K\in\mathcal{T}_h}\langle p, \bm{v}_0\cdot\bm{n}\rangle_{\partial K} = (\bm{f}, \bm{v}_0).
\end{split}
\end{equation}
Substituting \eqref{eq:7}, \eqref{eq:8} and \eqref{eq:9} into \eqref{eq:10} and adding $s_{\bm{u}}(\bm{Q}_h\bm{u}, \bm{v}_h)$ to the both sides, together with the boundary conditions, we present the first equality of Lemma \ref{Lemma2}.

\indent Next, we derive the other equation of this lemma. Considering the definition of $Q_0$ and Lemma \ref{Lemma1}, we find
\begin{equation*}
(c_0Q_0p_t, q_0) = (c_0p_t, q_0),
\end{equation*}
and
\begin{equation*}
\sum\limits_{K\in\mathcal{T}_h}(\nabla_w\cdot(\bm{Q}_h\bm{u}_t), q_0)_K = \sum\limits_{K\in\mathcal{T}_h}(\nabla\cdot\bm{u}_t, q_0)_K.
\end{equation*}
Using Lemma \ref{Lemma1}, the definition of discrete weak gradient, integration by parts and the definition of $\mathbb{Q}_0$, we get
\begin{align*}
\sum\limits_{K\in\mathcal{T}_h}\kappa(\nabla_wQ_hp, \nabla_wq_h)_K &= \sum\limits_{K\in\mathcal{T}_h}\kappa(\mathbb{Q}_0(\nabla p), \nabla_wq_h)_K\\
&= \sum\limits_{K\in\mathcal{T}_h}\{\kappa\langle \mathbb{Q}_0(\nabla p)\cdot\bm{n}, q_b\rangle_{\partial K}-\kappa(\nabla\cdot\mathbb{Q}_0(\nabla p), q_0)_K\}\\
&= \sum\limits_{K\in\mathcal{T}_h}\{\kappa(\mathbb{Q}_0(\nabla p), \nabla q_0)_K-\kappa\langle \mathbb{Q}_0(\nabla p)\cdot\bm{n}, q_0-q_b\rangle_{\partial K}\}\\
&= \sum\limits_{K\in\mathcal{T}_h}\{\kappa(\nabla p, \nabla q_0)_K-\kappa\langle \mathbb{Q}_0(\nabla p)\cdot\bm{n}, q_0-q_b\rangle_{\partial K}\},
\end{align*}
which implies that
\begin{equation*}
\sum\limits_{K\in\mathcal{T}_h}\kappa(\nabla p, \nabla q_0)_K = \sum\limits_{K\in\mathcal{T}_h}\kappa(\nabla_wQ_hp, \nabla_wq_h)_K+\sum\limits_{K\in\mathcal{T}_h}\kappa\langle \mathbb{Q}_0(\nabla p)\cdot\bm{n}, q_0-q_b\rangle_{\partial K}.
\end{equation*}
Testing \eqref{eq:6} with $q_0$ in $q_h=\{q_0, q_b\}$, and utilizing integration by parts, the boundary conditions and the above estimates, we acquire the second equality of this lemma, which completes the proof.
\end{proof}

\indent Based on Lemma \ref{Lemma2}, we apply the Wheeler's projection method in \cite{article9,article14} to study the optimal order of error estimates. For any $\bm{v}_h\in\bm{V}_h^{\bm{u}}$ and $q_h\in W_h^{p}$, define two elliptic projections $\bm{\widetilde{u}}_h\in\bm{V}_h^{\bm{u}}$ and $\widetilde{p}_h\in W_h^{p}$ such that
\begin{align}
a_{\bm{u}}(\bm{\widetilde{u}}_h,\bm{v}_h)-b(\bm{v}_h,\widetilde{p}_h) = &a_{\bm{u}}(\bm{Q}_h\bm{u},\bm{v}_h)-b(\bm{v}_h,Q_hp)-s_{\bm{u}}(\bm{Q}_h\bm{u},\bm{v}_h)-l_1(\bm{u},\bm{v}_h)-l_2(\bm{u},\bm{v}_h)\notag\\
&+l_3(p,\bm{v}_h),\label{eq:15}\\
a_p(\widetilde{p}_h,q_h) = &a_p(Q_hp,q_h)-s_p(Q_hp,q_h)-l_4(p,q_h).\label{eq:14}
\end{align}

\indent For the numerical analysis of WG, we usually focus on the following error decomposition,
\begin{equation*}
\bm{Q}_h\bm{u}-\bm{u}_h=(\bm{Q}_h\bm{u}-\bm{\widetilde{u}}_h)+(\bm{\widetilde{u}}_h-\bm{u}_h):=\bm{\epsilon}_h+\bm{e}_h,
\end{equation*}
and
\begin{equation*}
Q_hp-p_h=(Q_hp-\widetilde{p}_h)+(\widetilde{p}_h-p_h):=\theta_h+\rho_h.
\end{equation*}

\indent In order to bound the errors $\bm{\epsilon}_h$ and $\theta_h$, we have the following results from \cite{article38, MR3775100}.
\begin{lemma}\cite{article38, MR3775100}\label{Lemma3}
Let $\bm{u}\in\bm{H}^1_{\bm{u}}(\Omega)\cap\bm{H}^{j+2}(\Omega)$ and $p\in H^1_p(\Omega)\cap H^{j+1}(\Omega)$ with any integer $j\geq 1$, for any $\bm{v}_h\in\bm{V}_h$ and $q_h\in W_h$, then
\begin{align*}
|s_{\bm{u}}(\bm{Q}_h\bm{u},\bm{v}_h)| &\leq Ch^{j+1}\|\bm{u}\|_{j+2}|\!|\!| \bm{v}_h|\!|\!|_{\bm{V}},\\
|s_p(Q_hp,q_h)| &\leq Ch^j\|p\|_{j+1}|\!|\!| q_h|\!|\!|_W,\\
|l_1(\bm{u},\bm{v}_h)| &\leq Ch^{j+1}\|\bm{u}\|_{j+2}|\!|\!| \bm{v}_h|\!|\!|_{\bm{V}},\\
|l_2(\bm{u},\bm{v}_h)| &\leq Ch^{j+1}\|\bm{u}\|_{j+2}|\!|\!| \bm{v}_h|\!|\!|_{\bm{V}},\\
|l_3(p,\bm{v}_h)| &\leq Ch^{j+1}\|p\|_{j+1}|\!|\!| \bm{v}_h|\!|\!|_{\bm{V}},\\
|l_4(p,q_h)| &\leq Ch^{j}\|p\|_{j+1}|\!|\!| q_h|\!|\!|_{W}.
\end{align*}
\end{lemma}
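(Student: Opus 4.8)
The plan is to prove all six estimates by the same three-step template: (i) a Cauchy--Schwarz inequality on each boundary $\partial K$, (ii) a discrete weighted Cauchy--Schwarz over the elements $K\in\mathcal{T}_h$ that separates a \emph{data factor} built from projection residuals of $\bm{u}$ or $p$ from a \emph{test factor} built from $\bm{v}_h$ or $q_h$, and (iii) control of the data factor by the trace inequality together with the approximation properties of the $L^2$ projections, and of the test factor either directly by the stabilizer or by a norm equivalence. The only genuine subtlety is bookkeeping: each residual lives in a different projection space, so its approximation order (and hence the final power of $h$) differs, which is exactly why the $\bm{u}$-estimates carry $h^{j+1}$ while the two pressure-gradient--type estimates ($s_p$ and $l_4$) carry only $h^{j}$.

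For the stabilizer terms I would first rewrite $\bm{Q}_b\bm{Q}_0\bm{u}-\bm{Q}_b\bm{u}=\bm{Q}_b(\bm{Q}_0\bm{u}-\bm{u})$ and use the $L^2(\partial K)$-stability of $\bm{Q}_b$ to replace it by $\bm{Q}_0\bm{u}-\bm{u}$. After Cauchy--Schwarz the test factor is exactly $s_{\bm{u}}(\bm{v}_h,\bm{v}_h)^{1/2}\leq |\!|\!|\bm{v}_h|\!|\!|_{\bm{V}}$, so no norm equivalence is needed here. The data factor $\big(\sum_K h_K^{-1}\|\bm{Q}_0\bm{u}-\bm{u}\|_{\partial K}^2\big)^{1/2}$ is handled by the trace inequality $\|w\|_{\partial K}^2\leq C\big(h_K^{-1}\|w\|_K^2+h_K\|\nabla w\|_K^2\big)$ applied to $w=\bm{Q}_0\bm{u}-\bm{u}$, followed by the estimates $\|\bm{Q}_0\bm{u}-\bm{u}\|_K\leq Ch_K^{j+2}\|\bm{u}\|_{j+2,K}$ and $\|\nabla(\bm{Q}_0\bm{u}-\bm{u})\|_K\leq Ch_K^{j+1}\|\bm{u}\|_{j+2,K}$ for the projection onto $[P_{j+1}(K)]^d$; this yields $h_K^{-1}\|\bm{Q}_0\bm{u}-\bm{u}\|_{\partial K}^2\leq Ch_K^{2j+2}\|\bm{u}\|_{j+2,K}^2$ and, after summation, the bound $Ch^{j+1}\|\bm{u}\|_{j+2}$. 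The $s_p$-estimate is identical except that $Q_0$ projects onto $P_j(K)$, so the approximation order drops by one and the result is $Ch^{j}\|p\|_{j+1}$.

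For $l_1$ through $l_4$ I would weight the boundary integral by $h_K^{1/2}$ on the data side and $h_K^{-1/2}$ on the test side, so that the test factor becomes $\big(\sum_K h_K^{-1}\|\bm{v}_0-\bm{v}_b\|_{\partial K}^2\big)^{1/2}$ (resp. the $q_h$ analogue). Each data factor is then the $h_K$-weighted boundary norm of a residual, estimated again by trace inequality plus approximation: $\nabla\cdot\bm{u}-Q_0(\nabla\cdot\bm{u})$ and $\nabla\bm{u}-\bm{\widehat{\mathcal{Q}}}_0(\nabla\bm{u})$ lie in degree-$j$ spaces and give $h^{j+1}\|\bm{u}\|_{j+2}$ for $l_1,l_2$; $p-Q_0p$ gives $h^{j+1}\|p\|_{j+1}$ for $l_3$; and $\nabla p-\mathbb{Q}_0(\nabla p)$, being a projection onto $[P_{j-1}(K)]^d$, loses one order and gives only $h^{j}\|p\|_{j+1}$ for $l_4$. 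Combining the factors produces the six stated bounds.

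The one ingredient that is not a routine computation, and the step I expect to be the main obstacle, is controlling the test factor $\big(\sum_K h_K^{-1}\|\bm{v}_0-\bm{v}_b\|_{\partial K}^2\big)^{1/2}$ by $|\!|\!|\bm{v}_h|\!|\!|_{\bm{V}}$: the stabilizer only bounds the \emph{projected} jump $\bm{Q}_b\bm{v}_0-\bm{v}_b$, so to recover the full jump one splits $\bm{v}_0-\bm{v}_b=(\bm{v}_0-\bm{Q}_b\bm{v}_0)+(\bm{Q}_b\bm{v}_0-\bm{v}_b)$ and estimates the unprojected interior piece $\bm{v}_0-\bm{Q}_b\bm{v}_0$ through the weak-gradient part of the energy norm via an inverse/trace argument. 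This is precisely the norm equivalence underlying the coercivity of $a_{\bm{u}}(\cdot,\cdot)$ and $a_p(\cdot,\cdot)$ cited from \cite{MR3858071,article38}, so I would invoke that equivalence rather than reprove it, after which the remaining Cauchy--Schwarz, trace, and approximation steps are standard.
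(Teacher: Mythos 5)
Your argument is correct and is essentially the standard proof of these estimates: the paper itself offers no proof of Lemma \ref{Lemma3}, citing \cite{article38, MR3775100}, and your combination of weighted Cauchy--Schwarz, the trace inequality, $L^2$-projection approximation properties (with the correct bookkeeping of polynomial degrees, which is exactly what produces $h^{j+1}$ versus $h^j$), and the norm equivalence $\sum_K h_K^{-1}\|\bm{v}_0-\bm{v}_b\|^2_{\partial K}\leq C|\!|\!|\bm{v}_h|\!|\!|^2_{\bm{V}}$ is precisely how those references establish them. You also correctly isolate the only non-routine ingredient (recovering the full jump from the projected jump via the weak-gradient part of the energy norm), and invoking the cited norm equivalence for it is appropriate.
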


\indent The estimates of $\bm{\epsilon}_h$ and $\theta_h$ are provided as follows.
\begin{lemma}\label{Lemma4}
Assume that $\bm{u}\in\bm{H}^1_{\bm{u}}(\Omega)\cap \bm{H}^{j+2}(\Omega)$ and $p\in H_p^1(\Omega)\cap H^{j+1}(\Omega)$ with any integer $j\geq 1$. For any $\bm{v}_h\in\bm{V}_h^{\bm{u}}$ and $q_h\in W_h^p$, there hold
\begin{align*}
|\!|\!| \theta_h|\!|\!|_W &\leq Ch^j\|p\|_{j+1},\\
|\!|\!| \bm{\epsilon}_h|\!|\!|_{\bm{V}} &\leq Ch^{j+1}(\|\bm{u}\|_{j+2}+\|p\|_{j+1}).
\end{align*}
\end{lemma}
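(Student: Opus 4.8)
The plan is to estimate the two elliptic-projection errors in turn, exploiting that the pressure projection \eqref{eq:14} is decoupled from $\bm{\widetilde{u}}_h$: one controls $\theta_h$ first and then feeds it as data into the displacement equation \eqref{eq:15}.

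First I would treat $\theta_h=Q_hp-\widetilde{p}_h$. Reading off \eqref{eq:14} gives the error relation $a_p(\theta_h,q_h)=s_p(Q_hp,q_h)+l_4(p,q_h)$ for all $q_h\in W_h^p$. Taking $q_h=\theta_h$ and using $|\!|\!|\theta_h|\!|\!|_W^2=a_p(\theta_h,\theta_h)$, the right-hand side is bounded termwise by Lemma \ref{Lemma3}, so that $|\!|\!|\theta_h|\!|\!|_W^2\le Ch^j\|p\|_{j+1}|\!|\!|\theta_h|\!|\!|_W$; cancelling one factor yields the first assertion. This part is routine.

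For $\bm{\epsilon}_h=\bm{Q}_h\bm{u}-\bm{\widetilde{u}}_h$ I would rearrange \eqref{eq:15} into the error equation $a_{\bm{u}}(\bm{\epsilon}_h,\bm{v}_h)=b(\bm{v}_h,\theta_h)+s_{\bm{u}}(\bm{Q}_h\bm{u},\bm{v}_h)+l_1(\bm{u},\bm{v}_h)+l_2(\bm{u},\bm{v}_h)-l_3(p,\bm{v}_h)$ for all $\bm{v}_h\in\bm{V}_h^{\bm{u}}$, then set $\bm{v}_h=\bm{\epsilon}_h$ so that the left side is exactly $|\!|\!|\bm{\epsilon}_h|\!|\!|_{\bm{V}}^2$. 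The stabilization term together with $l_1,l_2,l_3$ are each estimated by Lemma \ref{Lemma3}, contributing $Ch^{j+1}(\|\bm{u}\|_{j+2}+\|p\|_{j+1})|\!|\!|\bm{\epsilon}_h|\!|\!|_{\bm{V}}$, which is already of the advertised order.

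The hard part will be the coupling term $b(\bm{\epsilon}_h,\theta_h)=\sum_{K\in\mathcal{T}_h}(\nabla_w\cdot\bm{\epsilon}_h,\theta_0)_K$, where $\theta_0$ denotes the interior component of $\theta_h$. Estimating it crudely through the boundedness of $b$ by $C|\!|\!|\bm{\epsilon}_h|\!|\!|_{\bm{V}}|\!|\!|\theta_h|\!|\!|_W$ only contributes the $O(h^j)$ factor from the first assertion and would degrade the displacement bound to the suboptimal order $h^j$. To keep $h^{j+1}$ I would instead use Cauchy--Schwarz in the sharper form $|b(\bm{\epsilon}_h,\theta_h)|\le\|\nabla_w\cdot\bm{\epsilon}_h\|\,\|\theta_0\|\le(\lambda+\mu)^{-1/2}|\!|\!|\bm{\epsilon}_h|\!|\!|_{\bm{V}}\,\|\theta_0\|$, where the last step discards the nonnegative $\mu$- and stabilization parts of $|\!|\!|\cdot|\!|\!|_{\bm{V}}$ and the factor $(\lambda+\mu)^{-1/2}$ is in fact favourable for the locking-free behaviour. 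Everything then rests on the superconvergent $L^2$ bound $\|\theta_0\|\le Ch^{j+1}\|p\|_{j+1}$, one power of $h$ better than the energy norm, which I expect to follow from an Aubin--Nitsche duality argument: introduce the dual problem $-\nabla\cdot(\kappa\nabla\Phi)=\theta_0$ with homogeneous data, invoke $H^2$-regularity $\|\Phi\|_2\le C\|\theta_0\|$, and rewrite $\|\theta_0\|^2$ through the WG identities of Lemma \ref{Lemma2} and the commutativity of Lemma \ref{Lemma1}, picking up the residual $s_p$ and $l_4$ terms of the $\theta_h$ error equation. Carrying this duality estimate out cleanly -- in particular tracking the consistency terms through the dual identity -- is where the genuine work lies. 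Granting it, I collect all contributions, divide out the single surviving factor $|\!|\!|\bm{\epsilon}_h|\!|\!|_{\bm{V}}$, and obtain $|\!|\!|\bm{\epsilon}_h|\!|\!|_{\bm{V}}\le Ch^{j+1}(\|\bm{u}\|_{j+2}+\|p\|_{j+1})$.
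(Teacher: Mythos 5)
Your proposal is correct and follows essentially the same route as the paper: both derive the $\theta_h$ bound by testing the error relation of \eqref{eq:14} with $\theta_h$, and both reduce the displacement estimate to the superconvergent $L^2$ bound $\|\theta_0\|\le Ch^{j+1}\|p\|_{j+1}$, obtained by an Aubin--Nitsche duality argument for the WG Poisson discretization \eqref{eq:14} (the paper simply cites the literature for this step, whereas you sketch it; your explicit factor $(\lambda+\mu)^{-1/2}$ in bounding $b(\bm{\epsilon}_h,\theta_h)$ is a harmless refinement of the paper's generic constant).
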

\begin{proof}
According to \eqref{eq:14} and Lemma \ref{Lemma3}, we obtain
\begin{align*}
a_p(\theta_h,q_h) &= a_p(Q_hp, q_h)-a_p(\widetilde{p}_h, q_h)\\
&= s_p(Q_hp, q_h)+l_4(p, q_h)\\
&\leq |s_p(Q_hp, q_h)|+|l_4(p, q_h)|\\
&\leq Ch^j\|p\|_{j+1}|\!|\!| q_h|\!|\!|_W.
\end{align*}
Taking $q_h=\theta_h$ in the above inequality, then
\begin{equation*}
|\!|\!| \theta_h|\!|\!|_W \leq Ch^j\|p\|_{j+1}.
\end{equation*}

\indent Because of \eqref{eq:15} and Lemma \ref{Lemma3}, we get
\begin{align*}
a_{\bm{u}}(\bm{\epsilon}_h, \bm{v}_h) &= a_{\bm{u}}(\bm{Q}_h\bm{u}, \bm{v}_h)-a_{\bm{u}}(\bm{\widetilde{u}}_h, \bm{v}_h)\\
&= b(\bm{v}_h, \theta_h)+s_{\bm{u}}(\bm{Q}_h\bm{u}, \bm{v}_h)+l_1(\bm{u},\bm{v}_h)+l_2(\bm{u},\bm{v}_h)-l_3(p,\bm{v}_h)\\
&\leq C|\!|\!| \bm{v}_h|\!|\!|_{\bm{V}}\|\theta_0\|+Ch^{j+1}\|\bm{u}\|_{j+2}|\!|\!|\bm{v}_h|\!|\!|_{\bm{V}}+Ch^{j+1}\|p\|_{j+1}|\!|\!| \bm{v}_h|\!|\!|_{\bm{V}}.
\end{align*}
Since \eqref{eq:14} is a standard WG discretization of Poisson equation, by the duality argument \cite{MR3325251}, we acquire
\begin{equation*}
\|\theta_0\| \leq Ch^{j+1}\|p\|_{j+1}.
\end{equation*}
Hence,
\begin{equation*}
a_{\bm{u}}(\bm{\epsilon}_h, \bm{v}_h) \leq Ch^{j+1}(\|\bm{u}\|_{j+2}+\|p\|_{j+1})|\!|\!| \bm{v}_h|\!|\!|_{\bm{V}}.
\end{equation*}
Taking $\bm{v}_h=\bm{\epsilon}_h$ in the above inequality, then
\begin{equation*}
|\!|\!| \bm{\epsilon}_h|\!|\!|_{\bm{V}} \leq Ch^{j+1}(\|\bm{u}\|_{j+2}+\|p\|_{j+1}),
\end{equation*}
which finishes the proof.
\end{proof}

\indent Next, we derive the error equations of semi-discrete WG method. For $\bm{v}_h\in\bm{V}_h^{\bm{u}}$ and $q_h\in W_h^p$, noticing \eqref{eq:15}, Lemma \ref{Lemma2} and \eqref{eq:3}, it follows that
\begin{align*}
a_{\bm{u}}(\bm{e}_h, \bm{v}_h)-b(\bm{v}_h, \rho_h) &= [a_{\bm{u}}(\bm{\widetilde{u}}_h, \bm{v}_h)-b(\bm{v}_h, \widetilde{p}_h)]-[a_{\bm{u}}(\bm{u}_h, \bm{v}_h)-b(\bm{v}_h, p_h)]\\
&= (\bm{f}, \bm{v}_0)-(\bm{f}, \bm{v}_0)\\
&= 0.
\end{align*}
By virtue of \eqref{eq:4}, \eqref{eq:14} and Lemma \ref{Lemma2}, we get
\begin{align*}
&(c_0\rho_{0,t}, q_0)+b(\bm{e}_{h,t}, q_h)+a_p(\rho_h,q_h)\\
= &[(c_0\widetilde{p}_{0,t}, q_0)+b(\bm{\widetilde{u}}_{h,t}, q_h)+a_p(\widetilde{p}_h,q_h)]-[(c_0p_{0,t}, q_0)+b(\bm{u}_{h,t}, q_h)+a_p(p_h,q_h)]\\
= &(c_0(\widetilde{p}_{0,t}-Q_0p_t), q_0)+b(\bm{\widetilde{u}}_{h,t}-\bm{Q}_h\bm{u}_t, q_h)\\
= &-(c_0\theta_{0,t},q_0)-b(\bm{\epsilon}_{h,t},q_h).
\end{align*}
Thus, we obtain the error equations
\begin{align}
a_{\bm{u}}(\bm{e}_h, \bm{v}_h)-b(\bm{v}_h, \rho_h) &= 0,\label{eq:18}\\
(c_0\rho_{0,t}, q_0)+b(\bm{e}_{h,t}, q_h)+a_p(\rho_h,q_h) &= -(c_0\theta_{0,t},q_0)-b(\bm{\epsilon}_{h,t},q_h).\label{eq:19}
\end{align}

\indent Before the error estimates of semi-discrete WG scheme, we need to supply the following useful lemma.
\begin{lemma}\cite{article32}\label{Lemma0}
Assume that the finite element partition $\mathcal{T}_h$ is shape regular. Then there exists a constant $C$ such that
\begin{equation*}
\|q_0\|^2 \leq C|\!|\!| q_h|\!|\!|^2_W, \qquad \forall\ q_h=\{q_0, q_b\}\in W_h^p.
\end{equation*}
\end{lemma}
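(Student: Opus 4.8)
The plan is to prove this discrete Poincar\'{e}--Friedrichs inequality by a duality argument, converting the $L^2$ control of $q_0$ into control of a broken discrete $H^1$ quantity that is in turn dominated by $|\!|\!| q_h|\!|\!|_W$. Since $\Omega$ is convex, I first introduce the auxiliary function $\varphi$ solving $-\triangle\varphi = q_0$ in $\Omega$ with $\varphi=0$ on $\Gamma_{p,D}$ and $\nabla\varphi\cdot\bm{n}=0$ on $\Gamma_{p,N}$, and invoke the $H^2$ elliptic regularity estimate $\|\varphi\|_2\leq C\|q_0\|$. Then $\|q_0\|^2 = -\sum_{K\in\mathcal{T}_h}(q_0,\triangle\varphi)_K$, and the whole argument consists of rewriting this right-hand side through the discrete weak gradient of $q_h$ and estimating the residual boundary integrals.

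The crucial ingredient I would establish first is the broken-gradient bound $\sum_{K\in\mathcal{T}_h}\|\nabla q_0\|_K^2 \leq C|\!|\!| q_h|\!|\!|_W^2$. To obtain it, I test the local definition of $\nabla_{w,K}q_h$ with $\bm{\zeta}=\nabla q_0\in[P_{j-1}(K)]^d$ and integrate by parts, arriving at
\begin{equation*}
\|\nabla q_0\|_K^2 = (\nabla_w q_h,\nabla q_0)_K+\langle q_0-q_b,\nabla q_0\cdot\bm{n}\rangle_{\partial K}.
\end{equation*}
Since $\nabla q_0\cdot\bm{n}\in P_{j-1}(e)$ on each edge, the boundary pairing only sees the projected jump, $\langle q_0-q_b,\nabla q_0\cdot\bm{n}\rangle_{\partial K}=\langle Q_bq_0-q_b,\nabla q_0\cdot\bm{n}\rangle_{\partial K}$. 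An inverse trace inequality $\|\nabla q_0\cdot\bm{n}\|_{\partial K}\leq Ch_K^{-1/2}\|\nabla q_0\|_K$ and Cauchy--Schwarz then give $\|\nabla q_0\|_K\leq\|\nabla_w q_h\|_K+Ch_K^{-1/2}\|Q_bq_0-q_b\|_{\partial K}$; squaring and summing produces the bound, the boundary contribution being exactly $s_p(q_h,q_h)\leq|\!|\!| q_h|\!|\!|_W^2$.

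Returning to the duality identity, I replace $\nabla\varphi$ by $\mathbb{Q}_0(\nabla\varphi)$ inside the weak gradient (legitimate because $\nabla q_0\in[P_{j-1}(K)]^d$ and $\nabla_{w,K}q_h\in[P_{j-1}(K)]^d$) and collect the approximation remainder. Carrying out the element-wise integration by parts and using the definition of $\nabla_{w,K}q_h$, together with the facts that $\nabla\varphi\cdot\bm{n}$ is single valued across interior edges (as $\varphi\in H^2$) and vanishes on $\Gamma_{p,N}$ while $q_b$ vanishes on $\Gamma_{p,D}$, I expect to reach
\begin{equation*}
\|q_0\|^2 = \sum_{K\in\mathcal{T}_h}(\nabla_w q_h,\mathbb{Q}_0(\nabla\varphi))_K-\sum_{K\in\mathcal{T}_h}\langle q_0-q_b,(\nabla\varphi-\mathbb{Q}_0(\nabla\varphi))\cdot\bm{n}\rangle_{\partial K}.
\end{equation*}
The first sum is bounded by $C|\!|\!| q_h|\!|\!|_W\|\varphi\|_1$ via Cauchy--Schwarz and $\|\mathbb{Q}_0(\nabla\varphi)\|\leq\|\nabla\varphi\|$. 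For the residual boundary sum I split $q_0-q_b=(q_0-Q_bq_0)+(Q_bq_0-q_b)$: the $Q_bq_0-q_b$ part is controlled by $s_p(q_h,q_h)^{1/2}$ against the projection error $\|(\nabla\varphi-\mathbb{Q}_0(\nabla\varphi))\cdot\bm{n}\|_{\partial K}\leq Ch_K^{1/2}\|\varphi\|_{2,K}$, while the $q_0-Q_bq_0$ part, after discarding its $P_{j-1}(e)$-orthogonal complement, is estimated using $\|q_0-Q_bq_0\|_{\partial K}\leq Ch_K^{1/2}\|\nabla q_0\|_K$ (a scaling/Bramble--Hilbert bound on the reference element) combined with the broken-gradient estimate. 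Both contributions carry a factor $h$ and are thus $\leq Ch|\!|\!| q_h|\!|\!|_W\|\varphi\|_2$.

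Assembling the three pieces and using $\|\varphi\|_2\leq C\|q_0\|$ gives $\|q_0\|^2\leq C|\!|\!| q_h|\!|\!|_W\|q_0\|$, and dividing by $\|q_0\|$ yields the claim. The main obstacle I anticipate is the residual boundary integral, specifically showing that the seemingly uncontrolled term $\langle q_0-Q_bq_0,\nabla\varphi\cdot\bm{n}\rangle$ is genuinely $O(h)$: this is what forces the sharp broken-gradient estimate of the first step, since a crude trace bound $\|q_0\|_{\partial K}\leq Ch_K^{-1/2}\|q_0\|_K$ would only leave an $O(1)$ term that cannot be absorbed into the left-hand side. A secondary technical point is the $H^2$ regularity of the mixed Dirichlet--Neumann auxiliary problem near the interface of $\Gamma_{p,D}$ and $\Gamma_{p,N}$, which I would assume, as is standard, rather than prove.
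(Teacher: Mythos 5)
The paper does not actually prove this lemma: it is imported verbatim from \cite{article32} with no argument given, so there is no in-paper proof to compare against. Your duality argument is, as far as I can check, a correct and self-contained proof. The two key identities are right: testing $\nabla_{w,K}q_h$ with $\bm{\zeta}=\nabla q_0\in[P_{j-1}(K)]^d$ does give $\|\nabla q_0\|_K^2=(\nabla_w q_h,\nabla q_0)_K+\langle q_0-q_b,\nabla q_0\cdot\bm{n}\rangle_{\partial K}$, and the replacement of $q_0-q_b$ by $Q_bq_0-q_b$ in the boundary pairing is legitimate because $\nabla q_0\cdot\bm{n}\in P_{j-1}(e)$ on the flat faces guaranteed by the shape-regularity assumptions; likewise your duality identity follows from the cancellation $\sum_K\langle q_b,\nabla\varphi\cdot\bm{n}\rangle_{\partial K}=0$ (single-valuedness of $\nabla\varphi\cdot\bm{n}$, $q_b=0$ on $\Gamma_{p,D}$, $\nabla\varphi\cdot\bm{n}=0$ on $\Gamma_{p,N}$) together with $(\nabla q_0,\mathbb{Q}_0\nabla\varphi)_K=(\nabla q_0,\nabla\varphi)_K$. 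You correctly identified that the crude trace bound on $q_0-Q_bq_0$ would leave an $O(1)$ term, and the Bramble--Hilbert estimate $\|q_0-Q_bq_0\|_{\partial K}\le Ch_K^{1/2}\|\nabla q_0\|_K$ combined with your broken-gradient bound closes that gap. Two remarks. First, the $H^2$ regularity of the mixed Dirichlet--Neumann problem that you flag is the only genuinely delicate point, but it is not essential: $\|\varphi\|_{1+s}\le C\|q_0\|$ for some $s>\tfrac12$ (which does hold on convex polygons) already makes every residual term $O(h^{s-1/2}\cdot h^{1/2})$ and hence absorbable, so the proof survives with reduced regularity. Second, there is a more elementary route that avoids elliptic regularity entirely: your broken-gradient bound $\sum_K\|\nabla q_0\|_K^2\le C|\!|\!| q_h|\!|\!|_W^2$ plus control of the interelement jumps $[q_0]$ by $\|Q_bq_0-q_b\|$ (again up to an $h_K^{1/2}\|\nabla q_0\|_K$ correction) feeds directly into Brenner's piecewise Poincar\'e--Friedrichs inequality; that is closer to what the WG literature, including the cited source, typically does. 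Your version buys a cleaner treatment of the mixed boundary conditions at the price of the regularity hypothesis; either is acceptable.
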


\indent Now we are ready for the optimal order convergence estimates as follows.
\begin{theorem}\label{Theorem1}
For any $t\in (0,T]$ and $j \geq 1$, let
\begin{equation*}
(\bm{u}(t); p(t))\in L^{\infty}(0,T;\bm{H}^1_{\bm{u}}(\Omega)\cap\bm{H}^{j+2}(\Omega)) \times L^{\infty}(0,T;H_p^1(\Omega)\cap H^{j+1}(\Omega))
\end{equation*}
be the solution of \eqref{eq:5} and \eqref{eq:6}, and $(\bm{u}_h(t); p_h(t))\in \bm{V}_h^{\bm{u}} \times W_h^p$ be the solution of \eqref{eq:3} and \eqref{eq:4}. Assume that
\begin{equation*}
\bm{u}_t(t)\in L^2(0,T;\bm{H}^{j+2}(\Omega)), \qquad p_t(t)\in L^2(0,T;H^{j+1}(\Omega)),
\end{equation*}
then
\begin{align*}
|\!|\!| \bm{Q}_h\bm{u}(t)-\bm{u}_h(t)|\!|\!|^2_{\bm{V}}+\|Q_0p(t)-p_0(t)\|^2 \leq &Ch^{2j+2}(\|\bm{u}(0)\|^2_{j+2}+\|p(0)\|^2_{j+1}+\|\bm{u}(t)\|^2_{j+2}+\|p(t)\|^2_{j+1}\\
+ &\int_0^t\|\bm{u}_t(\tau)\|^2_{j+2}d\tau+\int_0^t\|p_t(\tau)\|^2_{j+1}d\tau).
\end{align*}
\end{theorem}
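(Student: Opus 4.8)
The plan is to reduce everything to bounding the two ``discrete'' error components $\bm{e}_h=\bm{\widetilde{u}}_h-\bm{u}_h$ and $\rho_h=\widetilde{p}_h-p_h$, since the projection components $\bm{\epsilon}_h$ and $\theta_h$ are already controlled by Lemma \ref{Lemma4}, and $\|\theta_0\|\leq Ch^{j+1}\|p\|_{j+1}$ is available from the duality argument used in its proof. By the triangle inequality $|\!|\!|\bm{Q}_h\bm{u}-\bm{u}_h|\!|\!|_{\bm{V}}\leq|\!|\!|\bm{\epsilon}_h|\!|\!|_{\bm{V}}+|\!|\!|\bm{e}_h|\!|\!|_{\bm{V}}$ and $\|Q_0p-p_0\|\leq\|\theta_0\|+\|\rho_0\|$, so it suffices to estimate $|\!|\!|\bm{e}_h(t)|\!|\!|_{\bm{V}}$ and $\|\rho_0(t)\|$ at the current time.

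First I would run an energy argument on the error equations \eqref{eq:18}--\eqref{eq:19}. The key trick is the test-function choice $\bm{v}_h=\bm{e}_{h,t}$ in \eqref{eq:18}, which by symmetry of $a_{\bm{u}}(\cdot,\cdot)$ gives $b(\bm{e}_{h,t},\rho_h)=\tfrac{1}{2}\tfrac{d}{dt}|\!|\!|\bm{e}_h|\!|\!|^2_{\bm{V}}$. Choosing $q_h=\rho_h$ in \eqref{eq:19} produces exactly the coupling term $b(\bm{e}_{h,t},\rho_h)$, so substituting the previous identity cancels it and yields
\[
\tfrac{c_0}{2}\tfrac{d}{dt}\|\rho_0\|^2+\tfrac{1}{2}\tfrac{d}{dt}|\!|\!|\bm{e}_h|\!|\!|^2_{\bm{V}}+|\!|\!|\rho_h|\!|\!|^2_W=-(c_0\theta_{0,t},\rho_0)-b(\bm{\epsilon}_{h,t},\rho_h).
\]
Integrating from $0$ to $t$, I would bound the right-hand side by Cauchy--Schwarz and boundedness of $b$, producing factors $\|\theta_{0,t}\|\,\|\rho_0\|$ and $|\!|\!|\bm{\epsilon}_{h,t}|\!|\!|_{\bm{V}}\|\rho_0\|$; applying Young's inequality together with Lemma \ref{Lemma0} (so that $\|\rho_0\|^2\leq C|\!|\!|\rho_h|\!|\!|^2_W$) lets me absorb all $\|\rho_0\|$-contributions into the dissipation $\int_0^t|\!|\!|\rho_h|\!|\!|^2_W\,d\tau$ on the left. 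This leaves $|\!|\!|\bm{e}_h(t)|\!|\!|^2_{\bm{V}}$ controlled by the initial errors plus $\int_0^t(\|\theta_{0,t}\|^2+|\!|\!|\bm{\epsilon}_{h,t}|\!|\!|^2_{\bm{V}})\,d\tau$, with no Gronwall needed since the structure is dissipative.

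It remains to recover $\|\rho_0(t)\|$ pointwise in time, which is where I expect the main difficulty: the energy relation only controls $\tfrac{c_0}{2}\|\rho_0(t)\|^2$, which degenerates when $c_0=0$, whereas the target norm carries $\|Q_0p-p_0\|^2$ with coefficient one. To handle this robustly I would invoke the inf-sup stability of $b(\cdot,\cdot)$ together with $a_{\bm{u}}(\bm{e}_h,\bm{v}_h)=b(\bm{v}_h,\rho_h)$ from \eqref{eq:18}:
\[
\|\rho_0(t)\|\leq C\sup_{\bm{v}_h\in\bm{V}_h^{\bm{u}}}\frac{b(\bm{v}_h,\rho_h)}{|\!|\!|\bm{v}_h|\!|\!|_{\bm{V}}}=C\sup_{\bm{v}_h\in\bm{V}_h^{\bm{u}}}\frac{a_{\bm{u}}(\bm{e}_h,\bm{v}_h)}{|\!|\!|\bm{v}_h|\!|\!|_{\bm{V}}}\leq C|\!|\!|\bm{e}_h(t)|\!|\!|_{\bm{V}},
\]
so $\|\rho_0(t)\|$ inherits the bound on $|\!|\!|\bm{e}_h(t)|\!|\!|_{\bm{V}}$ uniformly in the storage coefficient.

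Finally I would assemble the estimate. Because the elliptic projections in \eqref{eq:15}--\eqref{eq:14} are linear and $t$-independent, differentiating in $t$ shows that $\bm{\epsilon}_{h,t}$ and $\theta_{0,t}$ are the projection errors of $\bm{u}_t$ and $p_t$; hence Lemma \ref{Lemma4} (applied to $\bm{u}_t,p_t$) and the duality bound give $\int_0^t|\!|\!|\bm{\epsilon}_{h,t}|\!|\!|^2_{\bm{V}}\,d\tau\leq Ch^{2j+2}\int_0^t(\|\bm{u}_t\|^2_{j+2}+\|p_t\|^2_{j+1})\,d\tau$ and $\int_0^t\|\theta_{0,t}\|^2\,d\tau\leq Ch^{2j+2}\int_0^t\|p_t\|^2_{j+1}\,d\tau$, while the initial terms $|\!|\!|\bm{e}_h(0)|\!|\!|_{\bm{V}}$ and $\|\rho_0(0)\|$ are bounded through the projection errors at $t=0$ by $Ch^{j+1}(\|\bm{u}(0)\|_{j+2}+\|p(0)\|_{j+1})$. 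Combining these with $|\!|\!|\bm{\epsilon}_h(t)|\!|\!|_{\bm{V}}$ and $\|\theta_0(t)\|$ from Lemma \ref{Lemma4} via the two triangle inequalities yields the stated bound. The routine parts are the Young/absorption bookkeeping and applying Lemma \ref{Lemma4} to time derivatives; the conceptual crux is the simultaneous cancellation of the coupling term through $\bm{v}_h=\bm{e}_{h,t}$ and the inf-sup recovery of the pressure error, which together make the estimate uniform in $c_0$ and thus locking-free.
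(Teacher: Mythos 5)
Your proposal follows the paper's proof in its skeleton: the same test functions $\bm{v}_h=\bm{e}_{h,t}$ in \eqref{eq:18} and $q_h=\rho_h$ in \eqref{eq:19}, the same cancellation of the coupling term $b(\bm{e}_{h,t},\rho_h)$, the same absorption of the right-hand side into $|\!|\!|\rho_h|\!|\!|_W^2$ via Lemma \ref{Lemma0} and Young's inequality, the same treatment of $\theta_{0,t}$ and $\bm{\epsilon}_{h,t}$ by differentiating the elliptic projections and invoking Lemma \ref{Lemma4}, and the same handling of the initial data as in \eqref{eq:30}--\eqref{eq:31}. The one genuine divergence is how you recover $\|\rho_0(t)\|$ pointwise in time: the paper simply keeps $\tfrac{c_0}{2}\tfrac{d}{dt}\|\rho_0\|^2$ on the left and, after integrating, writes $\|\rho_0(t)\|^2$ with coefficient one on the left-hand side --- a step that is only legitimate when $c_0$ is bounded away from zero (and whose hidden constant then depends on $c_0^{-1}$), even though the model permits $c_0\geq 0$ and Theorem \ref{Theorem1} does not assume $c_0>0$. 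Your alternative, deducing $\|\rho_0(t)\|\leq C\sup_{\bm{v}_h}b(\bm{v}_h,\rho_h)/|\!|\!|\bm{v}_h|\!|\!|_{\bm{V}}=C\sup_{\bm{v}_h}a_{\bm{u}}(\bm{e}_h,\bm{v}_h)/|\!|\!|\bm{v}_h|\!|\!|_{\bm{V}}\leq C|\!|\!|\bm{e}_h(t)|\!|\!|_{\bm{V}}$ from the stationary error equation \eqref{eq:18} and the discrete inf-sup condition the paper already cites, is valid and buys uniformity in $c_0$; it actually repairs the weak point in the paper's own argument and is more consistent with the locking-free claim. The only caveat is that the energy identity you display drops the nonnegative term $\tfrac{c_0}{2}\tfrac{d}{dt}\|\rho_0\|^2$ only after integration (it is fine to discard $\tfrac{c_0}{2}\|\rho_0(t)\|^2\geq 0$ but you must still carry $\tfrac{c_0}{2}\|\rho_0(0)\|^2$ to the right, which your initial-data bound covers), so no gap results.
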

\begin{proof}
Choosing $\bm{v}_h=\bm{e}_{h,t}$ and $q_h=\rho_h$ in \eqref{eq:18} and \eqref{eq:19}, respectively, and adding these two equalities yield
\begin{equation*}
a_{\bm{u}}(\bm{e}_h, \bm{e}_{h,t})+(c_0\rho_{0,t}, \rho_0)+|\!|\!| \rho_h|\!|\!|_{W}^2 = -(c_0\theta_{0,t}, \rho_0)-b(\bm{\epsilon}_{h,t}, \rho_h).
\end{equation*}
It follows from Lemma \ref{Lemma0} and the Cauchy-Schwarz inequality that
\begin{align*}
\frac{1}{2}\frac{d}{dt}|\!|\!| \bm{e}_h|\!|\!|_{\bm{V}}^2+\frac{c_0}{2}\frac{d}{dt}\|\rho_0\|^2+|\!|\!| \rho_h|\!|\!|_{W}^2 &= -(c_0\theta_{0,t}, \rho_0)-b(\bm{\epsilon}_{h,t},\rho_h)\\
&\leq c_0\|\theta_{0,t}\|\|\rho_0\|+C|\!|\!| \bm{\epsilon}_{h,t}|\!|\!|_{\bm{V}}\|\rho_0\|\\
&\leq c_0\|\theta_{0,t}\||\!|\!|\rho_h|\!|\!|_W+C|\!|\!| \bm{\epsilon}_{h,t}|\!|\!|_{\bm{V}}|\!|\!|\rho_h|\!|\!|_W\\
&\leq C\|\theta_{0,t}\|^2+C|\!|\!|\bm{\epsilon}_{h,t}|\!|\!|_{\bm{V}}^2+C_1|\!|\!|\rho_h|\!|\!|_W^2.
\end{align*}
Supposing $0<C_1\leq 1$ and integrating the both sides of inequality with respect to $t$, together with Lemma \ref{Lemma4}, we write
\begin{align*}
|\!|\!|\bm{e}_h(t)|\!|\!|_{\bm{V}}^2+\|\rho_0(t)\|^2 &\leq |\!|\!|\bm{e}_h(0)|\!|\!|_{\bm{V}}^2+\|\rho_0(0)\|^2+C\int_0^t\|\theta_{0,t}(\tau)\|^2d\tau+C\int_0^t|\!|\!|\bm{\epsilon}_{h,t}(\tau)|\!|\!|_{\bm{V}}^2d\tau\\
&\leq |\!|\!|\bm{e}_h(0)|\!|\!|_{\bm{V}}^2+\|\rho_0(0)\|^2 + Ch^{2j+2}(\int_0^t\|\bm{u}_{t}(\tau)\|_{j+2}^2d\tau+\int_0^t\|p_{t}(\tau)\|_{j+1}^2d\tau).
\end{align*}
Because of the error estimate of $L^2$ projection operator and Lemma \ref{Lemma4}, we arrive at
\begin{equation}\label{eq:30}
\begin{split}
|\!|\!|\bm{e}_h(0)|\!|\!|^2_{\bm{V}} &= |\!|\!|(\bm{Q}_h\bm{u}(0)-\bm{u}_h(0))-\bm{\epsilon}_h(0)|\!|\!|^2_{\bm{V}}\\
&\leq C|\!|\!|\bm{u}(0)-\bm{u}_h(0)|\!|\!|^2_{\bm{V}}+C|\!|\!|\bm{u}(0)-\bm{Q}_h\bm{u}(0)|\!|\!|^2_{\bm{V}}+C|\!|\!|\bm{\epsilon}_h(0)|\!|\!|^2_{\bm{V}}\\
&\leq Ch^{2j+2}(\|\bm{u}(0)\|_{j+2}^2+\|p(0)\|_{j+1}^2).
\end{split}
\end{equation}
Similarly, there holds
\begin{equation}\label{eq:31}
\begin{split}
\|\rho_0(0)\|^2 &= \|(Q_0p(0)-p_0(0))-\theta_0(0)\|^2\\
&\leq C\|p(0)-p_0(0)\|^2+C\|p(0)-Q_0p(0)\|^2+C\|\theta_0(0)\|^2\\
&\leq Ch^{2j+2}\|p(0)\|^2_{j+1}.
\end{split}
\end{equation}
Therefore,
\begin{align*}
&|\!|\!| \bm{Q}_h\bm{u}(t)-\bm{u}_h(t)|\!|\!|^2_{\bm{V}}+\|Q_0p(t)-p_0(t)\|^2\\
\leq &C(|\!|\!| \bm{\epsilon}_h(t)|\!|\!|^2_{\bm{V}}+|\!|\!| \bm{e}_h(t)|\!|\!|^2_{\bm{V}}+\|\theta_0(t)\|^2+\|\rho_0(t)\|^2)\\
\leq &Ch^{2j+2}(\|\bm{u}(0)\|^2_{j+2}+\|p(0)\|^2_{j+1}+\|\bm{u}(t)\|^2_{j+2}+\|p(t)\|^2_{j+1}+\int_0^t\|\bm{u}_t(\tau)\|^2_{j+2}d\tau+\int_0^t\|p_t(\tau)\|^2_{j+1}d\tau),
\end{align*}
where Lemma \ref{Lemma4} is applied, and the proof is completed.
\end{proof}

\begin{theorem}
Under the assumption of Theorem \ref{Theorem1} with $c_0>0$, we have
\begin{equation*}
|\!|\!| Q_hp(t)-p_h(t)|\!|\!|^2_W \leq Ch^{2j}(\|p(0)\|_{j+1}^2+\|p(t)\|_{j+1}^2)+Ch^{2j+2}(\int_0^t\|p_t(\tau)\|_{j+1}^2d\tau+\int_0^t\|\bm{u}_t(\tau)\|_{j+2}^2d\tau).
\end{equation*}
\end{theorem}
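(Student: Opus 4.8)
The plan is to split the pressure error through the decomposition $Q_hp-p_h=\theta_h+\rho_h$ introduced before \eqref{eq:18} and control the two pieces separately. By the triangle inequality $|\!|\!| Q_hp(t)-p_h(t)|\!|\!|_W\le |\!|\!|\theta_h(t)|\!|\!|_W+|\!|\!|\rho_h(t)|\!|\!|_W$, and Lemma \ref{Lemma4} already furnishes $|\!|\!|\theta_h(t)|\!|\!|_W\le Ch^j\|p(t)\|_{j+1}$, which accounts for the $h^{2j}\|p(t)\|_{j+1}^2$ contribution. Hence everything reduces to an energy estimate for $|\!|\!|\rho_h(t)|\!|\!|_W$ itself.

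Unlike Theorem \ref{Theorem1}, where the choice $q_h=\rho_h$ produced $\frac{d}{dt}|\!|\!|\bm{e}_h|\!|\!|_{\bm{V}}^2$, here I would extract the time derivative of $|\!|\!|\rho_h|\!|\!|_W^2$. First I would differentiate the first error equation \eqref{eq:18} in time (legitimate since $\bm{V}_h^{\bm{u}}$ is time independent) to get $a_{\bm{u}}(\bm{e}_{h,t},\bm{v}_h)=b(\bm{v}_h,\rho_{h,t})$ for all $\bm{v}_h$, and then test the second error equation \eqref{eq:19} with $q_h=\rho_{h,t}\in W_h^p$. Using the symmetry of $a_p$ to write $a_p(\rho_h,\rho_{h,t})=\frac12\frac{d}{dt}|\!|\!|\rho_h|\!|\!|_W^2$, and substituting $\bm{v}_h=\bm{e}_{h,t}$ into the differentiated identity to recognize $b(\bm{e}_{h,t},\rho_{h,t})=|\!|\!|\bm{e}_{h,t}|\!|\!|_{\bm{V}}^2$, I arrive at the identity $c_0\|\rho_{0,t}\|^2+|\!|\!|\bm{e}_{h,t}|\!|\!|_{\bm{V}}^2+\frac12\frac{d}{dt}|\!|\!|\rho_h|\!|\!|_W^2=-(c_0\theta_{0,t},\rho_{0,t})-b(\bm{\epsilon}_{h,t},\rho_{h,t})$.

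The delicate point — and the reason the hypothesis $c_0>0$ is imposed — is the term $b(\bm{\epsilon}_{h,t},\rho_{h,t})$, which I can only bound by $C|\!|\!|\bm{\epsilon}_{h,t}|\!|\!|_{\bm{V}}\|\rho_{0,t}\|$ via the boundedness of $b$ together with the coercivity bound $\|\nabla_w\cdot\bm{\epsilon}_{h,t}\|\le C|\!|\!|\bm{\epsilon}_{h,t}|\!|\!|_{\bm{V}}$; the factor $\|\rho_{0,t}\|$ admits no control in the $W$-norm and must instead be absorbed into the dissipative term $c_0\|\rho_{0,t}\|^2$ on the left. Treating $(c_0\theta_{0,t},\rho_{0,t})$ the same way, two applications of Young's inequality with weights tuned to $c_0$ absorb every $\|\rho_{0,t}\|$ factor; after discarding the nonnegative $|\!|\!|\bm{e}_{h,t}|\!|\!|_{\bm{V}}^2$ and the residual $\|\rho_{0,t}\|^2$, this leaves $\frac{d}{dt}|\!|\!|\rho_h|\!|\!|_W^2\le C\|\theta_{0,t}\|^2+C|\!|\!|\bm{\epsilon}_{h,t}|\!|\!|_{\bm{V}}^2$. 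I expect this absorption, which is exactly where $c_0>0$ is indispensable, to be the main obstacle.

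Finally I would integrate from $0$ to $t$ and estimate the three resulting quantities. Differentiating the elliptic projection definitions \eqref{eq:14} and \eqref{eq:15} in time shows that $\theta_{h,t}$ and $\bm{\epsilon}_{h,t}$ are the corresponding projection errors of $p_t$ and $(\bm{u}_t,p_t)$, so the duality argument behind Lemma \ref{Lemma4} gives $\|\theta_{0,t}\|\le Ch^{j+1}\|p_t\|_{j+1}$ and $|\!|\!|\bm{\epsilon}_{h,t}|\!|\!|_{\bm{V}}\le Ch^{j+1}(\|\bm{u}_t\|_{j+2}+\|p_t\|_{j+1})$, producing the $h^{2j+2}$ time-integral terms. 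For the initial contribution, the discrete initial data $p_h(0)=Q_hp(0)$ (as already used in \eqref{eq:31}) gives $\rho_h(0)=-\theta_h(0)$, whence $|\!|\!|\rho_h(0)|\!|\!|_W=|\!|\!|\theta_h(0)|\!|\!|_W\le Ch^j\|p(0)\|_{j+1}$ by Lemma \ref{Lemma4}, supplying the $h^{2j}\|p(0)\|_{j+1}^2$ term. Combining these with the $\theta_h(t)$ bound from the first paragraph yields the stated estimate.
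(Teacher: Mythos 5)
Your proposal is correct and follows essentially the same route as the paper: differentiating the first error equation \eqref{eq:18} in time, testing with $\bm{v}_h=\bm{e}_{h,t}$ and $q_h=\rho_{h,t}$ to produce $\frac{1}{2}\frac{d}{dt}|\!|\!|\rho_h|\!|\!|_W^2$ plus the dissipative term $c_0\|\rho_{0,t}\|^2$, absorbing the right-hand side via Young's inequality (which is exactly where $c_0>0$ enters, as you correctly identify), integrating, and invoking Lemma \ref{Lemma4} together with the $Ch^{2j}\|p(0)\|_{j+1}^2$ bound on $|\!|\!|\rho_h(0)|\!|\!|_W^2$. The only cosmetic difference is that the paper bounds $|\!|\!|\rho_h(0)|\!|\!|_W$ by a triangle inequality rather than the identity $\rho_h(0)=-\theta_h(0)$, which changes nothing.
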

\begin{proof}
First, we differentiate \eqref{eq:18} with respect to $t$,
\begin{equation}\label{eq:20}
a_{\bm{u}}(\bm{e}_{h,t}, \bm{v}_h)-b(\bm{v}_h, \rho_{h,t})=0.
\end{equation}
Taking $\bm{v}_h=\bm{e}_{h,t}$ and $q_h=\rho_{h,t}$ in \eqref{eq:20} and \eqref{eq:19}, respectively, and adding,
\begin{align*}
|\!|\!|\bm{e}_{h,t}|\!|\!|_{\bm{V}}^2+c_0\|\rho_{0,t}\|^2+\frac{1}{2}\frac{d}{dt}|\!|\!|\rho_h|\!|\!|_W^2 &= -(c_0\theta_{0,t},\rho_{0,t})-b(\bm{\epsilon}_{h,t}, \rho_{h,t})\\
&\leq c_0\|\theta_{0,t}\|\|\rho_{0,t}\|+C|\!|\!|\bm{\epsilon}_{h,t}|\!|\!|_{\bm{V}}\|\rho_{0,t}\|\\
&\leq C\|\theta_{0,t}\|^2+C|\!|\!|\bm{\epsilon}_{h,t}|\!|\!|^2_{\bm{V}}+C_2\|\rho_{0,t}\|^2.
\end{align*}
Assuming $0<C_2\leq c_0$ and integrating with respect to $t$, it follows from Lemma \ref{Lemma4} that
\begin{align*}
|\!|\!|\rho_{h}(t)|\!|\!|_{W}^2 &\leq |\!|\!|\rho_h(0)|\!|\!|_W^2+C\int_0^t\|\theta_{0,t}(\tau)\|^2d\tau+C\int_0^t|\!|\!|\bm{\epsilon}_{h,t}(\tau)|\!|\!|_{\bm{V}}^2d\tau\\
&\leq |\!|\!|\rho_{h}(0)|\!|\!|_W^2+Ch^{2j+2}(\int_0^t\|p_t(\tau)\|_{j+1}^2d\tau+\int^t_0\|\bm{u}_t(\tau)\|_{j+2}^2d\tau).
\end{align*}
Using the error estimate of $L^2$ projection operator and Lemma \ref{Lemma4}, we provide
\begin{equation}\label{eq:33}
\begin{split}
|\!|\!| \rho_h(0)|\!|\!|_W^2 &\leq C(|\!|\!| p(0)-p_h(0)|\!|\!|_W^2+|\!|\!| p(0)-Q_hp(0)|\!|\!|_W^2+|\!|\!| \theta_h(0)|\!|\!|_W^2)\\
&\leq Ch^{2j}\|p(0)\|_{j+1}^2.
\end{split}
\end{equation}
Hence,
\begin{align*}
|\!|\!| Q_hp(t)-p_h(t)|\!|\!|^2_W &\leq C(|\!|\!| \theta_h(t)|\!|\!|^2_W+|\!|\!| \rho_h(t)|\!|\!|^2_W)\\
&\leq Ch^{2j}\|p(t)\|_{j+1}^2+C|\!|\!|\rho_h(0)|\!|\!|_W^2+Ch^{2j+2}(\int_0^t\|p_t(\tau)\|_{j+1}^2d\tau+\int_0^t\|\bm{u}_t(\tau)\|_{j+2}^2d\tau)\\
&\leq Ch^{2j}(\|p(0)\|_{j+1}^2+\|p(t)\|_{j+1}^2)+Ch^{2j+2}(\int_0^t\|p_t(\tau)\|_{j+1}^2d\tau+\int_0^t\|\bm{u}_t(\tau)\|_{j+2}^2d\tau),
\end{align*}
where Lemma \ref{Lemma4} is utilized, and we finish the proof.
\end{proof}

\subsection{Discrete time WG method}

\indent In this section, we estimate the errors of fully discrete WG method. Similarly to the semi-discrete problem, we separate $\bm{Q}_h\bm{u}(t_n)-\bm{u}_h^n$ and $Q_hp(t_n)-p_h^n$ into two parts, respectively,
\begin{align*}
\bm{Q}_h\bm{u}(t_n)-\bm{u}_h^n = &(\bm{Q}_h\bm{u}(t_n)-\bm{\widetilde{u}}_h(t_n))+(\bm{\widetilde{u}}_h(t_n)-\bm{u}_h^n)\\
:= &\bm{\epsilon}_h(t_n)+\bm{e}_h^n,
\end{align*}
and
\begin{align*}
Q_hp(t_n)-p_h^n = &(Q_hp(t_n)-\widetilde{p}_h(t_n))+(\widetilde{p}_h(t_n)-p_h^n)\\
:= &\theta_h(t_n)+\rho_h^n.
\end{align*}
Then we obtain the discrete time error equations,
\begin{align*}
a_{\bm{u}}(\bm{e}_h^n, \bm{v}_h)-b(\bm{v}_h, \rho_h^n) &= [a_{\bm{u}}(\bm{\widetilde{u}}_h(t_n), \bm{v}_h)-b(\bm{v}_h, \widetilde{p}_h(t_n))]-[a_{\bm{u}}(\bm{u}_h^n, \bm{v}_h)-b(\bm{v}_h, p_h^n)]\\
&= (\bm{f}(t_n), \bm{v}_0)-(\bm{f}(t_n), \bm{v}_0)\\
&= 0,
\end{align*}
and
\begin{align*}
&(c_0\partial_\tau \rho_0^n, q_0)+b(\partial_\tau \bm{e}_h^n, q_h)+a_p(\rho_h^n, q_h)\\
= &[(c_0\partial_\tau \widetilde{p}_0(t_n), q_0)+b(\partial_\tau \bm{\widetilde{u}}_h(t_n), q_h)+a_p(\widetilde{p}_h(t_n), q_h)]-[(c_0\partial_\tau p_0^n, q_0)+b(\partial_\tau \bm{u}_h^n, q_h)+a_p(p_h^n, q_h)]\\
= &[(c_0\partial_\tau \widetilde{p}_0(t_n), q_0)+b(\partial_\tau \bm{\widetilde{u}}_h(t_n), q_h)+a_p(\widetilde{p}_h(t_n), q_h)]-[(g(t_n), q_0)+\langle \gamma(t_n), q_b\rangle_{\Gamma_{p, N}}]\\
= &c_0(\partial_\tau \widetilde{p}_0(t_n)-Q_0p_t(t_n), q_0)+b(\partial_\tau \bm{\widetilde{u}}_h(t_n)-\bm{Q}_h\bm{u}_t(t_n), q_h)\\
= &[-c_0(\partial_\tau \theta_0(t_n), q_0)-b(\partial_\tau \bm{\epsilon}_h(t_n), q_h)]+[c_0(\partial_\tau Q_0p(t_n)-Q_0p_t(t_n), q_0)+b(\partial_\tau \bm{Q}_h\bm{u}(t_n)-\bm{Q}_h\bm{u}_t(t_n), q_h)].
\end{align*}
For convenience, we set $J^n_{p0}:=\partial_\tau Q_0p(t_n)-Q_0p_t(t_n)$ and $\bm{J}_{\bm{u}}^n:=\partial_\tau \bm{Q}_h\bm{u}(t_n)-\bm{Q}_h\bm{u}_t(t_n)$, then the error equations for the fully discrete problem are given as follows,
\begin{align}
a_{\bm{u}}(\bm{e}_h^n, \bm{v}_h)-b(\bm{v}_h, \rho_h^n) = &0,\label{eq:23}\\
(c_0\partial_\tau \rho_0^n, q_0)+b(\partial_\tau \bm{e}_h^n, q_h)+a_p(\rho_h^n, q_h) = &[-c_0(\partial_\tau \theta_0(t_n), q_0)-b(\partial_\tau \bm{\epsilon}_h(t_n), q_h)]\notag\\
&+[c_0(J^n_{p0}, q_0)+b(\bm{J}_{\bm{u}}^n, q_h)].\label{eq:24}
\end{align}

\indent The optimal order error estimates for the fully discrete scheme are given in the next two theorems.
\begin{theorem}\label{fullydiscrete}
For $j\geq 1$, let
\begin{equation*}
(\bm{u}(t); p(t))\in L^{\infty}(0,T;\bm{H}_{\bm{u}}^1(\Omega)\cap\bm{H}^{j+2}(\Omega)) \times L^{\infty}(0,T;H_p^1(\Omega)\cap H^{j+1}(\Omega))
\end{equation*}
be the solution of \eqref{eq:5} and \eqref{eq:6}, and $(\bm{u}_h^n; p_h^n)\in \bm{V}_h^{\bm{u}} \times W_h^p$ be the solution of \eqref{eq:21} and \eqref{eq:22}. Suppose that
\begin{align*}
\bm{u}_t(t) &\in L^2(0,T;\bm{H}^{j+2}(\Omega)), \quad \bm{u}_{tt}(t)\in L^2(0,T;\bm{H}_{\bm{u}}^1(\Omega)),\\
p_t(t) &\in L^2(0,T;H^{j+1}(\Omega)),\ \quad p_{tt}(t)\in L^2(0,T; H_p^1(\Omega)).
\end{align*}
Then there holds
\begin{align*}
|\!|\!|\bm{Q}_h\bm{u}(t_n)-\bm{u}_h^n|\!|\!|_{\bm{V}}^2+\|Q_0p(t_n)-p_0^n\|^2 \leq &C\tau^2\int_0^{t_n}(\|\bm{u}_{tt}\|_1^2+\|p_{tt}\|^2)d\tau+Ch^{2j+2}(\|\bm{u}(0)\|_{j+2}^2+\|p(0)\|_{j+1}^2\\
&+\|\bm{u}(t_n)\|_{j+2}^2+\|p(t_n)\|_{j+1}^2+\int_0^{t_n}(\|\bm{u}_t\|_{j+2}^2+\|p_t\|_{j+1}^2)d\tau).
\end{align*}
\end{theorem}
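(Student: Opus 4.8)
The plan is to adapt the energy argument of Theorem~\ref{Theorem1} to the backward-Euler setting, replacing time derivatives by difference quotients and integration in time by summation over the time levels. Starting from the fully discrete error equations \eqref{eq:23} and \eqref{eq:24}, I would test \eqref{eq:23} with $\bm{v}_h=\partial_\tau\bm{e}_h^n$ and \eqref{eq:24} with $q_h=\rho_h^n$, and add the two identities. The coupling term $b(\partial_\tau\bm{e}_h^n,\rho_h^n)$ then cancels, leaving
\begin{equation*}
a_{\bm{u}}(\bm{e}_h^n,\partial_\tau\bm{e}_h^n)+(c_0\partial_\tau\rho_0^n,\rho_0^n)+|\!|\!|\rho_h^n|\!|\!|_W^2 = -c_0(\partial_\tau\theta_0(t_n),\rho_0^n)-b(\partial_\tau\bm{\epsilon}_h(t_n),\rho_h^n)+c_0(J^n_{p0},\rho_0^n)+b(\bm{J}_{\bm{u}}^n,\rho_h^n).
\end{equation*}

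Next I would invoke the elementary backward-difference inequality valid for any symmetric positive semidefinite form, $a(x^n,x^n-x^{n-1})\ge\tfrac12\big(a(x^n,x^n)-a(x^{n-1},x^{n-1})\big)$, applied to both $a_{\bm{u}}(\cdot,\cdot)$ and the $L^2$ inner product, to obtain
\begin{equation*}
\frac{1}{2\tau}\big(|\!|\!|\bm{e}_h^n|\!|\!|_{\bm{V}}^2-|\!|\!|\bm{e}_h^{n-1}|\!|\!|_{\bm{V}}^2\big)+\frac{c_0}{2\tau}\big(\|\rho_0^n\|^2-\|\rho_0^{n-1}\|^2\big)+|\!|\!|\rho_h^n|\!|\!|_W^2 \le \text{RHS}.
\end{equation*}
On the right I would bound each term by Cauchy--Schwarz together with Lemma~\ref{Lemma0} (to convert $\|\rho_0^n\|$ into $|\!|\!|\rho_h^n|\!|\!|_W$) and the boundedness of $b(\cdot,\cdot)$, then use Young's inequality to absorb a fraction of $|\!|\!|\rho_h^n|\!|\!|_W^2$ into the left-hand side.

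The delicate point is the pair of consistency terms $c_0(J^n_{p0},\rho_0^n)$ and $b(\bm{J}_{\bm{u}}^n,\rho_h^n)$ arising from the time discretization. Writing $J^n_{p0}=Q_0\zeta^n$ and $\bm{J}_{\bm{u}}^n=\bm{Q}_h\bm{w}^n$ with $\zeta^n=\partial_\tau p(t_n)-p_t(t_n)$ and $\bm{w}^n=\partial_\tau\bm{u}(t_n)-\bm{u}_t(t_n)$, the Taylor identity $\bm{w}^n=-\tfrac1\tau\int_{t_{n-1}}^{t_n}(s-t_{n-1})\bm{u}_{tt}(s)\,ds$ gives $\|\bm{w}^n\|_1^2\le\tau\int_{t_{n-1}}^{t_n}\|\bm{u}_{tt}\|_1^2\,ds$, and likewise $\|\zeta^n\|^2\le\tau\int_{t_{n-1}}^{t_n}\|p_{tt}\|^2\,ds$. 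The crucial observation is that $b(\bm{J}_{\bm{u}}^n,\rho_h^n)$ must be controlled through the commuting property $\nabla_w\cdot(\bm{Q}_h\bm{w}^n)=Q_0(\nabla\cdot\bm{w}^n)$ of Lemma~\ref{Lemma1}, so that $|b(\bm{J}_{\bm{u}}^n,\rho_h^n)|\le C\|\bm{w}^n\|_1\|\rho_0^n\|$; this is precisely what forces the $H^1$-in-space regularity of $\bm{u}_{tt}$ appearing in the hypotheses and yields the $\tau^2$ rate after summation.

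Finally I would multiply the energy inequality by $2\tau$, sum over the time levels, and telescope the first two terms. The truncation sums collapse to $C\tau^2\int_0^{t_n}(\|\bm{u}_{tt}\|_1^2+\|p_{tt}\|^2)\,d\tau$, while the elliptic-projection sums $\tau\sum\|\partial_\tau\theta_0(t_k)\|^2$ and $\tau\sum|\!|\!|\partial_\tau\bm{\epsilon}_h(t_k)|\!|\!|_{\bm{V}}^2$ are bounded, via $\partial_\tau\theta_0(t_k)=\tfrac1\tau\int_{t_{k-1}}^{t_k}\theta_{0,t}\,ds$ and the time-differentiated versions of Lemma~\ref{Lemma4}, by $Ch^{2j+2}\int_0^{t_n}(\|\bm{u}_t\|_{j+2}^2+\|p_t\|_{j+1}^2)\,d\tau$. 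The initial contributions $|\!|\!|\bm{e}_h^0|\!|\!|_{\bm{V}}^2$ and $\|\rho_0^0\|^2$ are handled exactly as in \eqref{eq:30} and \eqref{eq:31}, and a final triangle inequality splitting $\bm{Q}_h\bm{u}(t_n)-\bm{u}_h^n=\bm{\epsilon}_h(t_n)+\bm{e}_h^n$ and $Q_0p(t_n)-p_0^n=\theta_0(t_n)+\rho_0^n$, combined with Lemma~\ref{Lemma4}, gives the stated estimate. I expect the main obstacle to be the careful bookkeeping of the summation-by-parts, coupled with the consistency-term estimate that relies on Lemma~\ref{Lemma1} to recover the optimal $\tau^2$ order under only $H^1$-in-space control of the second time derivatives.
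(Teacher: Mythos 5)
Your proposal follows essentially the same route as the paper's proof: the same test functions $\bm{v}_h=\partial_\tau\bm{e}_h^n$, $q_h=\rho_h^n$ in \eqref{eq:23}--\eqref{eq:24}, the same backward-difference energy identity, absorption via Lemma \ref{Lemma0} and Young's inequality, telescoping, the same Taylor-expansion treatment of $J^n_{p0}$ and $\bm{J}_{\bm{u}}^n$ yielding the $\tau^2$ rate, the initial-data bounds \eqref{eq:30}--\eqref{eq:31}, and the final triangle inequality with Lemma \ref{Lemma4}. The only cosmetic difference is that you bound $b(\bm{J}_{\bm{u}}^n,\rho_h^n)$ through the commuting property of Lemma \ref{Lemma1}, whereas the paper uses the boundedness of $b(\cdot,\cdot)$ together with the bound $|\!|\!|\bm{J}_{\bm{u}}^n|\!|\!|_{\bm{V}}\le C\|\partial_\tau\bm{u}(t_n)-\bm{u}_t(t_n)\|_1$; both yield the identical estimate.
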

\begin{proof}
Choosing $\bm{v}_h=\partial_\tau \bm{e}_h^n$ and $q_h=\rho_h^n$ in \eqref{eq:23} and \eqref{eq:24} and counting up,
\begin{equation*}
a_{\bm{u}}(\bm{e}_h^n, \partial_\tau \bm{e}_h^n)+c_0(\partial_\tau \rho_0^n, \rho_0^n)+|\!|\!|\rho_h^n|\!|\!|_W^2=[-c_0(\partial_\tau \theta_0(t_n), \rho_0^n)-b(\partial_\tau \bm{\epsilon}_h(t_n), \rho_h^n)]+[c_0(J_{p0}^n, \rho_0^n)+b(\bm{J}_{\bm{u}}^n, \rho_h^n)].
\end{equation*}
Since
\begin{equation*}
a_{\bm{u}}(\bm{e}_h^n, \partial_\tau \bm{e}_h^n)=\frac{1}{2}\partial_\tau a_{\bm{u}}(\bm{e}_h^n, \bm{e}_h^n)+\frac{\tau}{2}a_{\bm{u}}(\partial_\tau \bm{e}_h^n, \partial_\tau \bm{e}_h^n),
\end{equation*}
and
\begin{equation*}
(\partial_\tau \rho_0^n, \rho_0^n)=\frac{1}{2}\partial_\tau (\rho_0^n, \rho_0^n)+\frac{\tau}{2}(\partial_\tau \rho_0^n, \partial_\tau \rho_0^n),
\end{equation*}
together with the Cauchy-Schwarz inequality and Lemma \ref{Lemma0}, then
\begin{align*}
&\frac{1}{2}\partial_\tau |\!|\!|\bm{e}_h^n|\!|\!|^2_{\bm{V}}+\frac{\tau}{2}|\!|\!|\partial_\tau \bm{e}_h^n|\!|\!|^2_{\bm{V}}+\frac{c_0}{2}\partial_\tau \|\rho_0^n\|^2+\frac{c_0\tau}{2}\|\partial_\tau \rho_0^n\|^2+|\!|\!|\rho_h^n|\!|\!|_W^2\\
\leq &C(\|\partial_\tau \theta_0(t_n)\|\|\rho_0^n\|+|\!|\!|\partial_\tau \bm{\epsilon}_h(t_n)|\!|\!|_{\bm{V}}\|\rho_0^n\|+\|J_{p0}^n\|\|\rho_0^n\|+|\!|\!|\bm{J}_{\bm{u}}^n|\!|\!|_{\bm{V}}\|\rho_0^n\|)\\
\leq &C(\|\partial_\tau \theta_0(t_n)\|^2+|\!|\!|\partial_\tau \bm{\epsilon}_h(t_n)|\!|\!|_{\bm{V}}^2+\|J_{p0}^n\|^2+|\!|\!|\bm{J}_{\bm{u}}^n|\!|\!|_{\bm{V}}^2)+C_3|\!|\!|\rho_h^n|\!|\!|_W^2.
\end{align*}
Let $0<C_3\leq 1$, and we find
\begin{equation*}
\frac{1}{2}\partial_\tau |\!|\!|\bm{e}_h^n|\!|\!|_{\bm{V}}^2+\frac{c_0}{2}\partial_\tau \|\rho_0^n\|^2 \leq C(\|\partial_\tau \theta_0(t_n)\|^2+|\!|\!|\partial_\tau \bm{\epsilon}_h(t_n)|\!|\!|^2_{\bm{V}}+\|J_{p0}^n\|^2+|\!|\!|\bm{J}_{\bm{u}}^n|\!|\!|_{\bm{V}}^2),
\end{equation*}
i.e.,
\begin{equation*}
|\!|\!|\bm{e}_h^n|\!|\!|_{\bm{V}}^2+\|\rho_0^n\|^2 \leq |\!|\!|\bm{e}_h^{n-1}|\!|\!|^2_{\bm{V}}+\|\rho_0^{n-1}\|^2+C\tau(\|\partial_\tau \theta_0(t_n)\|^2+|\!|\!|\partial_\tau \bm{\epsilon}_h(t_n)|\!|\!|^2_{\bm{V}}+\|J_{p0}^n\|^2+|\!|\!|\bm{J}_{\bm{u}}^n|\!|\!|^2_{\bm{V}}).
\end{equation*}
It follows by induction that
\begin{equation}\label{eq:25}
|\!|\!|\bm{e}_h^n|\!|\!|_{\bm{V}}^2+\|\rho_0^n\|^2 \leq |\!|\!|\bm{e}_h^{0}|\!|\!|^2_{\bm{V}}+\|\rho_0^{0}\|^2+\sum\limits_{i=1}^{n}C\tau(\|\partial_\tau \theta_0(t_i)\|^2+|\!|\!|\partial_\tau \bm{\epsilon}_h(t_i)|\!|\!|^2_{\bm{V}}+\|J_{p0}^i\|^2+|\!|\!|\bm{J}_{\bm{u}}^i|\!|\!|^2_{\bm{V}}).
\end{equation}
Next, we estimate the four terms $\sum\limits_{i=1}^{n}C\tau\|\partial_\tau \theta_0(t_i)\|^2$, $\sum\limits_{i=1}^{n}C\tau|\!|\!|\partial_\tau \bm{\epsilon}_h(t_i)|\!|\!|^2_{\bm{V}}$, $\sum\limits_{i=1}^{n}C\tau\|J_{p0}^i\|^2$ and $\sum\limits_{i=1}^{n}C\tau|\!|\!|\bm{J}_{\bm{u}}^i|\!|\!|^2_{\bm{V}}$, respectively. Firstly, let us focus on $\sum\limits_{i=1}^{n}C\tau\|\partial_\tau \theta_0(t_i)\|^2$. Since
\begin{equation*}
\partial_\tau \theta_0(t_i)=\frac{1}{\tau}\int_{t_{i-1}}^{t_i}\theta_{0,t}d\tau,
\end{equation*}
combined with Lemma \ref{Lemma4}, we present
\begin{equation}\label{eq:26}
\sum\limits_{i=1}^nC\tau\|\partial_\tau \theta_0(t_i)\|^2 \leq \sum\limits_{i=1}^nC\int_{t_{i-1}}^{t_i}\|\theta_{0,t}\|^2d\tau \leq Ch^{2j+2}\int_0^{t_n}\|p_t\|_{j+1}^2d\tau.
\end{equation}
Similarly, we have
\begin{equation}\label{eq:27}
\sum\limits_{i=1}^nC\tau|\!|\!|\partial_\tau \bm{\epsilon}_h(t_i)|\!|\!|^2_{\bm{V}} \leq \sum\limits_{i=1}^nC\int_{t_{i-1}}^{t_i}|\!|\!|\bm{\epsilon}_{h,t}|\!|\!|^2_{\bm{V}}d\tau \leq Ch^{2j+2}\int_0^{t_n}(\|\bm{u}_t\|_{j+2}^2+\|p_t\|_{j+1}^2)d\tau.
\end{equation}
In order to bound $\sum\limits_{i=1}^nC\tau\|J_{p0}^i\|^2$, we use
\begin{equation*}
J_{p0}^i = \partial_\tau Q_0p(t_i)-Q_0p_t(t_i) = \frac{1}{\tau}Q_0(p(t_i)-p(t_{i-1})-\tau p_t(t_i)) = \frac{1}{\tau}Q_0(\int_{t_{i-1}}^{t_i}(-\tau+t_{i-1})p_{tt}(\tau)d\tau),
\end{equation*}
therefore,
\begin{equation}\label{eq:28}
\sum\limits_{i=1}^nC\tau\|J_{p0}^i\|^2 \leq \sum\limits_{i=1}^nC\int_{t_{i-1}}^{t_i}\|(-\tau+t_{i-1})p_{tt}(\tau)\|^2d\tau \leq C\tau^2\int_0^{t_n}\|p_{tt}\|^2d\tau.
\end{equation}
Likewise, we obtain
\begin{equation}\label{eq:29}
\sum\limits_{i=1}^nC\tau|\!|\!|\bm{J}_{\bm{u}}^i|\!|\!|^2_{\bm{V}} \leq \sum\limits_{i=1}^nC\int_{t_{i-1}}^{t_i}\|(-\tau+t_{i-1})\bm{u}_{tt}(\tau)\|^2_1d\tau \leq C\tau^2\int_0^{t_n}\|\bm{u}_{tt}\|_1^2d\tau.
\end{equation}
Substituting \eqref{eq:26}, \eqref{eq:27}, \eqref{eq:28} and \eqref{eq:29} into \eqref{eq:25}, we get
\begin{align*}
|\!|\!|\bm{e}_h^n|\!|\!|^2_{\bm{V}}+\|\rho_0^n\|^2 \leq &|\!|\!|\bm{e}_h^0|\!|\!|^2_{\bm{V}}+\|\rho_0^0\|^2+C\tau^2\int_0^{t_n}(\|\bm{u}_{tt}\|_1^2+\|p_{tt}\|^2)d\tau\\
&+Ch^{2j+2}\int_0^{t_n}(\|\bm{u}_t\|_{j+2}^2+\|p_t\|_{j+1}^2)d\tau.
\end{align*}
Making use of Lemma \ref{Lemma4}, \eqref{eq:30} and \eqref{eq:31}, we arrive at
\begin{align*}
&|\!|\!| \bm{Q}_h\bm{u}(t_n)-\bm{u}_h^n|\!|\!|^2_{\bm{V}}+\|Q_0p(t_n)-p_0^n\|^2\\
\leq &C(|\!|\!|\bm{\epsilon}_h(t_n)|\!|\!|^2_{\bm{V}}+|\!|\!|\bm{e}_h^n|\!|\!|^2_{\bm{V}}+\|\theta_0(t_n)\|^2+\|\rho_0^n\|^2)\\
\leq &Ch^{2j+2}(\|\bm{u}(t_n)\|_{j+2}^2+\|p(t_n)\|_{j+1}^2)+C(|\!|\!|\bm{e}_h^0|\!|\!|_{\bm{V}}^2+\|\rho_0^0\|^2)+C\tau^2\int_0^{t_n}(\|\bm{u}_{tt}\|_1^2+\|p_{tt}\|^2)d\tau\\
&+Ch^{2j+2}\int_0^{t_n}(\|\bm{u}_t\|_{j+2}^2+\|p_t\|_{j+1}^2)d\tau\\
\leq &C\tau^2\int_0^{t_n}(\|\bm{u}_{tt}\|_1^2+\|p_{tt}\|^2)d\tau+Ch^{2j+2}(\|\bm{u}(0)\|_{j+2}^2+\|p(0)\|_{j+1}^2+\|\bm{u}(t_n)\|_{j+2}^2+\|p(t_n)\|_{j+1}^2\\
&+\int_0^{t_n}(\|\bm{u}_t\|_{j+2}^2+\|p_t\|_{j+1}^2)d\tau),
\end{align*}
which completes the proof.
\end{proof}

\begin{theorem}\label{theorem4.9}
Under the assumption of Theorem \ref{fullydiscrete} together with $c_0>0$, we have the following estimate
\begin{align*}
|\!|\!| Q_hp(t_n)-p_h^n|\!|\!|_W^2 \leq &Ch^{2j}(\|p(0)\|_{j+1}^2+\|p(t_n)\|_{j+1}^2)+Ch^{2j+2}\int_0^{t_n}(\|\bm{u}_t\|_{j+2}^2+\|p_t\|_{j+1}^2)d\tau\\
&+C\tau^2\int_0^{t_n}(\|\bm{u}_{tt}\|_1^2+\|p_{tt}\|^2)d\tau.
\end{align*}
\end{theorem}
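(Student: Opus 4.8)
The plan is to mirror the continuous-time argument of the preceding theorem, but now using a discrete-in-time energy identity. Since $Q_hp(t_n)-p_h^n=\theta_h(t_n)+\rho_h^n$, it suffices by the triangle inequality to control $|\!|\!|\rho_h^n|\!|\!|_W$, because Lemma \ref{Lemma4} already gives $|\!|\!|\theta_h(t_n)|\!|\!|_W\le Ch^j\|p(t_n)\|_{j+1}$, which accounts for the $Ch^{2j}\|p(t_n)\|_{j+1}^2$ contribution. To obtain an equation in which the time increment of $\rho_h^n$ appears, I would first apply the difference operator $\partial_\tau$ to the displacement error equation \eqref{eq:23}, obtaining $a_{\bm{u}}(\partial_\tau\bm{e}_h^n,\bm{v}_h)-b(\bm{v}_h,\partial_\tau\rho_h^n)=0$ for all $\bm{v}_h\in\bm{V}_h^{\bm{u}}$.

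Next I would test this differenced equation with $\bm{v}_h=\partial_\tau\bm{e}_h^n$ and test the pressure error equation \eqref{eq:24} with $q_h=\partial_\tau\rho_h^n$, then add the two. The mixed terms $\pm b(\partial_\tau\bm{e}_h^n,\partial_\tau\rho_h^n)$ cancel, leaving $|\!|\!|\partial_\tau\bm{e}_h^n|\!|\!|_{\bm{V}}^2+c_0\|\partial_\tau\rho_0^n\|^2+a_p(\rho_h^n,\partial_\tau\rho_h^n)$ on the left. The crucial algebraic step is the discrete product rule for the symmetric form $a_p$, namely $a_p(\rho_h^n,\partial_\tau\rho_h^n)=\tfrac12\partial_\tau|\!|\!|\rho_h^n|\!|\!|_W^2+\tfrac\tau2|\!|\!|\partial_\tau\rho_h^n|\!|\!|_W^2$, exactly the identity already used for $a_{\bm{u}}$ and for $(\cdot,\cdot)$ in the proof of Theorem \ref{fullydiscrete}. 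Both extra nonnegative terms on the left may then be discarded.

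For the right-hand side $[-c_0(\partial_\tau\theta_0(t_n),\partial_\tau\rho_0^n)-b(\partial_\tau\bm{\epsilon}_h(t_n),\partial_\tau\rho_h^n)]+[c_0(J_{p0}^n,\partial_\tau\rho_0^n)+b(\bm{J}_{\bm{u}}^n,\partial_\tau\rho_h^n)]$, I would bound each term by Cauchy--Schwarz, use the boundedness of $b$ to replace the $b(\cdot,\partial_\tau\rho_h^n)$ factors by $C|\!|\!|\cdot|\!|\!|_{\bm{V}}\|\partial_\tau\rho_0^n\|$, and then apply Young's inequality so that every factor of $\|\partial_\tau\rho_0^n\|$ is split off. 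This produces a term $\tfrac{c_0}{2}\|\partial_\tau\rho_0^n\|^2$ that is absorbed into the left-hand $c_0\|\partial_\tau\rho_0^n\|^2$; this is the one place where the hypothesis $c_0>0$ is genuinely needed, precisely as in the continuous-time theorem. What remains is $\tfrac12\partial_\tau|\!|\!|\rho_h^n|\!|\!|_W^2\le C(\|\partial_\tau\theta_0(t_n)\|^2+|\!|\!|\partial_\tau\bm{\epsilon}_h(t_n)|\!|\!|_{\bm{V}}^2+\|J_{p0}^n\|^2+|\!|\!|\bm{J}_{\bm{u}}^n|\!|\!|_{\bm{V}}^2)$.

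Finally I would multiply by $\tau$, sum over the time levels $i=1,\dots,n$, and telescope the left side to $|\!|\!|\rho_h^n|\!|\!|_W^2-|\!|\!|\rho_h^0|\!|\!|_W^2$. The four resulting sums on the right are exactly those estimated in \eqref{eq:26}--\eqref{eq:29}, yielding the $h^{2j+2}$ and $\tau^2$ contributions, while the initial term obeys $|\!|\!|\rho_h^0|\!|\!|_W^2\le Ch^{2j}\|p(0)\|_{j+1}^2$ as in \eqref{eq:33}. Combining this bound on $|\!|\!|\rho_h^n|\!|\!|_W^2$ with the estimate for $|\!|\!|\theta_h(t_n)|\!|\!|_W^2$ then gives the claimed inequality. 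The main obstacle is organizing the discrete energy step correctly: forming the $\partial_\tau$-differenced displacement equation, deploying the discrete product rule, and arranging the absorption so that only the nonnegative $c_0\|\partial_\tau\rho_0^n\|^2$ is consumed. Once this is set up, the four summed terms are reused verbatim from Theorem \ref{fullydiscrete}.
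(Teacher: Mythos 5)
Your proposal is correct and follows essentially the same route as the paper: difference the displacement error equation in time, test with $\partial_\tau\bm{e}_h^n$ and $\partial_\tau\rho_h^n$, apply the discrete product rule to $a_p(\rho_h^n,\partial_\tau\rho_h^n)$, absorb the $\|\partial_\tau\rho_0^n\|^2$ terms using $c_0>0$, telescope, and reuse the bounds \eqref{eq:26}--\eqref{eq:29} and \eqref{eq:33} together with Lemma \ref{Lemma4} for $\theta_h(t_n)$. Your derivation of the differenced equation directly from \eqref{eq:23} is in fact a slightly cleaner justification of \eqref{eq:32} than the paper's phrasing, but the argument is otherwise identical.
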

\begin{proof}
Applying the backward Euler method to approximate the time derivative in \eqref{eq:20},
\begin{equation}\label{eq:32}
a_{\bm{u}}(\partial_\tau \bm{e}_h^n, \bm{v}_h)-b(\bm{v}_h, \partial_\tau \rho_h^n) = 0.
\end{equation}
Taking $\bm{v}_h=\partial_\tau \bm{e}_h^n$ and $q_h=\partial_\tau \rho_h^n$ in \eqref{eq:32} and \eqref{eq:24}, and adding,
\begin{align*}
|\!|\!| \partial_\tau \bm{e}_h^n|\!|\!|^2_{\bm{V}}+c_0\|\partial_\tau \rho_0^n\|^2+a_p(\rho_h^n, \partial_\tau \rho_h^n) = &[-c_0(\partial_\tau \theta_0(t_n), \partial_\tau \rho_0^n)-b(\partial_\tau \bm{\epsilon}_h(t_n), \partial_\tau \rho_h^n)]\\
&+[c_0(J_{p0}^n, \partial_\tau \rho_0^n)+b(\bm{J}_{\bm{u}}^n, \partial_\tau \rho_h^n)].
\end{align*}
Since
\begin{equation*}
a_p(\rho_h^n, \partial_\tau \rho_h^n)=\frac{1}{2}\partial_\tau a_p(\rho_h^n, \rho_h^n)+\frac{\tau}{2}a_p(\partial_\tau \rho_h^n, \partial_\tau \rho_h^n),
\end{equation*}
it follows from the Cauchy-Schwarz inequality that
\begin{align*}
&|\!|\!| \partial_\tau \bm{e}_h^n|\!|\!|^2_{\bm{V}}+c_0\|\partial_\tau \rho_0^n\|^2+\frac{1}{2}\partial_\tau |\!|\!| \rho_h^n|\!|\!|_W^2+\frac{\tau}{2}|\!|\!| \partial_\tau \rho_h^n|\!|\!|_W^2\\
= &[-c_0(\partial_\tau \theta_0(t_n), \partial_\tau \rho_0^n)-b(\partial_\tau \bm{\epsilon}_h(t_n), \partial_\tau \rho_h^n)]+[c_0(J_{p0}^n, \partial_\tau \rho_0^n)+b(\bm{J}_{\bm{u}}^n, \partial_\tau \rho_h^n)]\\
\leq &C(\|\partial_\tau \theta_0(t_n)\|^2+|\!|\!|\partial_\tau \bm{\epsilon}_h(t_n)|\!|\!|_{\bm{V}}^2+\|J_{p0}^n\|^2+|\!|\!|\bm{J}_{\bm{u}}^n|\!|\!|^2_{\bm{V}})+C_4\|\partial_\tau \rho_0^n\|^2.
\end{align*}
Let $0<C_4\leq c_0$, then
\begin{equation*}
|\!|\!| \rho_h^n|\!|\!|_W^2 \leq |\!|\!| \rho_h^{n-1}|\!|\!|_W^2+C\tau(\|\partial_\tau \theta_0(t_n)\|^2+|\!|\!| \partial_\tau \bm{\epsilon}_h(t_n)|\!|\!|^2_{\bm{V}}+\|J_{p0}^n\|^2+|\!|\!|\bm{J}_{\bm{u}}^n|\!|\!|_{\bm{V}}^2).
\end{equation*}
Utilizing the iteration method, \eqref{eq:33}, \eqref{eq:26}, \eqref{eq:27}, \eqref{eq:28} and \eqref{eq:29}, we obtain
\begin{align*}
|\!|\!|\rho_h^n|\!|\!|_W^2 &\leq |\!|\!|\rho_h^0|\!|\!|_W^2+\sum\limits_{i=1}^nC\tau(\|\partial_\tau \theta_0(t_i)\|^2+|\!|\!|\partial_\tau \bm{\epsilon}_h(t_i)|\!|\!|_{\bm{V}}^2+\|J_{p0}^i\|^2+|\!|\!|\bm{J}_{\bm{u}}^i|\!|\!|^2_{\bm{V}})\\
&\leq Ch^{2j}\|p(0)\|_{j+1}^2+Ch^{2j+2}\int_0^{t_n}(\|\bm{u}_t\|_{j+2}^2+\|p_t\|_{j+1}^2)d\tau+C\tau^2\int_0^{t_n}(\|\bm{u}_{tt}\|_1^2+\|p_{tt}\|^2)d\tau,
\end{align*}
which gives, combined with Lemma \ref{Lemma4},
\begin{align*}
|\!|\!| Q_hp(t_n)-p_h^n|\!|\!|_W^2 \leq &C(|\!|\!| \theta_h(t_n)|\!|\!|_W^2+|\!|\!| \rho_h^n|\!|\!|^2_W)\\
\leq &Ch^{2j}(\|p(0)\|_{j+1}^2+\|p(t_n)\|_{j+1}^2)+Ch^{2j+2}\int_0^{t_n}(\|\bm{u}_t\|_{j+2}^2+\|p_t\|_{j+1}^2)d\tau\\
&+C\tau^2\int_0^{t_n}(\|\bm{u}_{tt}\|_1^2+\|p_{tt}\|^2)d\tau.
\end{align*}
The proof is finished.
\end{proof}

\section{Numerical experiments}\label{NumericalExperiments}

\indent In this section, we carry out some numerical examples from two aspects: (1) Our proposed methods are flexible in the selections of mesh; (2) The locking problem is overcome by the presented WG methods. Throughout this section, we consider the system \eqref{eq:5} and \eqref{eq:6} on a two-dimensional domain $\Omega=(0,1)^2$, with the Dirichlet boundary conditions \eqref{DirichletU} and \eqref{DirichletP} for $\bm{u}$ and $p$ on the entire boundary, respectively. The parameters are taken as $c_0=1$, $\kappa=1$, $\mu=1$ and the final time $T=1$. For $\lambda$, we separately test two cases that $\lambda=1$ and $\lambda=1, 10^{4}, 10^{8}$ in the following two subsections. For the weak finite element spaces, we choose $j=1$. Specifically, we adopt the following discrete spaces,
\begin{align*}
\bm{V}_h &:= \{\bm{v}_{h}=\{\bm{v}_0, \bm{v}_b\}: \{\bm{v}_0, \bm{v}_b\}|_K\in[P_{2}(K)]^2 \times [P_1(e)]^2, \ K\in\mathcal{T}_h,\ e\subset \partial K\},\\
W_h &:= \{q_{h}=\{q_0, q_b\}: \{q_0, q_b\}|_K\in P_{1}(K) \times P_{0}(e), \ K\in\mathcal{T}_h,\ e\subset \partial K\},
\end{align*}
and the weak differential operators are computed by
\begin{align*}
(\nabla_{w,K}\cdot\bm{v}_h, \psi)_K &= -(\bm{v}_0, \nabla\psi)_K+\langle\bm{v}_b\cdot\bm{n}, \psi\rangle_{\partial K}, \quad \forall\ \psi\in P_1(K),\\
(\nabla_{w,K}\bm{v}_h, \phi)_K &= -(\bm{v}_0, \nabla\cdot\phi)_K+\langle\bm{v}_b, \phi\cdot\bm{n}\rangle_{\partial K}, \quad \forall\ \phi\in[P_1(K)]^{2\times 2},\\
(\nabla_{w,K}q_h, \bm{\zeta})_K &= -(q_0, \nabla\cdot\bm{\zeta})_K+\langle q_b, \bm{\zeta}\cdot\bm{n}\rangle_{\partial K}, \quad \forall\ \bm{\zeta}\in[P_{0}(K)]^2.
\end{align*}
As in the previous section, we use $\bm{u}_h^n=\{\bm{u}_0^n, \bm{u}_b^n\}\in \bm{V}_h^{\bm{u}}$ and $p_h^n=\{p_0^n, p_b^n\}\in W_h^p$, given by \eqref{eq:21} and \eqref{eq:22}, to denote the approximate solution of $\bm{u}(t_n)$ and $p(t_n)$, respectively. The $L^2$-norm for $\bm{Q}_0\bm{u}(t_n)-\bm{u}_0^n$, the $|\!|\!|\cdot|\!|\!|_{\bm{V}}$-norm for $\bm{Q}_h\bm{u}(t_n)-\bm{u}_h^n$, the $L^2$-norm for $Q_0p(t_n)-p_0^n$ and the $|\!|\!|\cdot|\!|\!|_W$-norm for $Q_hp(t_n)-p_h^n$ are utilized to illustrate the numerical results.\\

\subsection{Tests for convergence orders on different meshes}\label{cotodm}

\indent In this subsection, we accomplish the numerical computations and estimate the convergence orders on triangular meshes, rectangular meshes and hybrid polygonal meshes, respectively. The Lam\'e constant $\lambda=1$ is chosen. The right-hand side terms $\bm{f}$ and $g$ of \eqref{eq:5} and \eqref{eq:6} are selected according to the analytical solution which is given as follows,
\begin{align*}
\displaystyle\bm{u} &= \binom{10x^2(1-x)^2y(1-y)(1-2y)\exp(-t)}{-10x(1-x)(1-2x)y^2(1-y)^2\exp(-2t)},\\
p &= 10x^2(1-x)^2y(1-y)(1-2y)\exp(-3t).
\end{align*}

\subsubsection{Triangular meshes}

\indent A uniform triangular mesh is considered on the two-dimensional domain $\Omega=(0,1)^2$, and we test the convergence orders with $h=\frac{\sqrt{2}}{2}, \frac{\sqrt{2}}{4}, \frac{\sqrt{2}}{8}, \frac{\sqrt{2}}{16}, \frac{\sqrt{2}}{32}, \frac{\sqrt{2}}{64}$ and the time step $\tau=h^2$. Table \ref{table1} and \ref{table2} describe that the error convergence orders of the $L^2$-norm and the $|\!|\!|\cdot|\!|\!|_{\bm{V}}$-norm for $\bm{u}$ are severally $O(h^3)$ and $O(h^2)$, and the ones of the $L^2$-norm and the $|\!|\!|\cdot|\!|\!|_{W}$-norm for $p$ are $O(h^2)$ and $O(h)$, respectively. These optimal convergence orders verify our theoretical results in Theorem \ref{fullydiscrete} and \ref{theorem4.9}.
\begin{table}[!htbp]
\caption{WG error convergence orders for $\bm{u}$ with $\lambda=1$ and $\tau=h^2$ on uniform triangular meshes}\label{table1}
  \begin{center}
  \begin{tabular}{ccccc}
   \hline
 $h$&    $\|\bm{Q}_0\bm{u}(t_n)-\bm{u}_0^n\|$     &Order         & $|\!|\!|\bm{Q}_h\bm{u}(t_n)-\bm{u}_h^n|\!|\!|_{\bm{V}}$   & Order \\ \hline
 $\frac{\sqrt{2}}{2}$&              9.1014E-03&                    -      &              7.0190E-02                                           & -        \\
 $\frac{\sqrt{2}}{4}$&              1.3362E-03&                    2.7679 &              2.1489E-02                                           & 1.7077    \\
 $\frac{\sqrt{2}}{8}$&              1.7620E-04&                    2.9229 &              5.7772E-03                                           & 1.8952    \\
 $\frac{\sqrt{2}}{16}$&             2.2603E-05&                    2.9626 &              1.4845E-03                                           & 1.9604      \\
 $\frac{\sqrt{2}}{32}$&             2.9438E-06&                    2.9408 &              3.7534E-04                                           & 1.9837 \\
 $\frac{\sqrt{2}}{64}$&             4.1461E-07&                    2.8278 &              9.4302E-05                                           & 1.9928 \\
\hline
  \end{tabular}
  \end{center}
\end{table}
\begin{table}[!htbp]
\caption{WG error convergence orders for $p$ with $\lambda=1$ and $\tau=h^2$ on uniform triangular meshes}\label{table2}
  \begin{center}
  \begin{tabular}{ccccc}
   \hline
 $h$&   $\|Q_0p(t_n)-p_0^n\|$&        Order  &        $|\!|\!|Q_hp(t_n)-p_h^n|\!|\!|_{W}$  &    Order \\ \hline
$\frac{\sqrt{2}}{2}$&      4.5721E-03&                   -    &                1.9844E-02                   &     - \\
$\frac{\sqrt{2}}{4}$&      8.9546E-04&                 2.3521 &                7.2165E-03                   &   1.4593 \\
$\frac{\sqrt{2}}{8}$&      2.2088E-04&                 2.0194 &                3.4149E-03                   &   1.0794  \\
$\frac{\sqrt{2}}{16}$&     5.5267E-05&                 1.9987 &                1.6864E-03                   &   1.0180 \\
$\frac{\sqrt{2}}{32}$&     1.3825E-05&                 1.9992 &                8.4064E-04                   &   1.0044 \\
$\frac{\sqrt{2}}{64}$&     3.4568E-06&                 1.9998 &                4.2001E-04                   &   1.0011 \\
\hline
  \end{tabular}
  \end{center}
\end{table}

\subsubsection{Rectangular meshes}

\indent In this test, we make use of a uniform rectangular mesh $\mathcal{T}_h$ with $h=\frac{1}{2}, \frac{1}{4}, \frac{1}{8}, \frac{1}{16}, \frac{1}{32}, \frac{1}{64}$ and the time step $\tau=h^2$, and the convergence rates are depicted in Table \ref{table3} and \ref{table4}. From the two tables, it can be seen that the four norms for $\bm{u}$ and $p$ all achieve the optimal error convergence orders which are accordance with our theoretical analysis.
\begin{table}[!htbp]
\caption{WG error convergence orders for $\bm{u}$ with $\lambda=1$ and $\tau=h^2$ on uniform rectangular meshes}\label{table3}
  \begin{center}
  \begin{tabular}{ccccc}
   \hline
 $h$&    $\|\bm{Q}_0\bm{u}(t_n)-\bm{u}_0^n\|$     &Order         & $|\!|\!|\bm{Q}_h\bm{u}(t_n)-\bm{u}_h^n|\!|\!|_{\bm{V}}$   & Order \\ \hline
$\frac{1}{2}$&        2.1907E-02&                  - &                            1.2383E-01&                       -  \\
$\frac{1}{4}$&        3.3989E-03&              2.6883&                            3.9438E-02&                    1.6507 \\
$\frac{1}{8}$&        4.5465E-04&              2.9022&                            1.1434E-02&                    1.7863 \\
$\frac{1}{16}$&       5.6820E-05&              3.0003&                            3.1454E-03&                    1.8620 \\
$\frac{1}{32}$&       7.1510E-06&              2.9902&                            8.3765E-04&                    1.9088 \\
$\frac{1}{64}$&       9.8402E-07&              2.8614&                            2.1730E-04&                    1.9467 \\
\hline
  \end{tabular}
  \end{center}
\end{table}
\begin{table}[!htbp]
\caption{WG error convergence orders for $p$ with $\lambda=1$ and $\tau=h^2$ on uniform rectangular meshes}\label{table4}
  \begin{center}
  \begin{tabular}{ccccc}
   \hline
 $h$&   $\|Q_0p(t_n)-p_0^n\|$&        Order  &        $|\!|\!|Q_hp(t_n)-p_h^n|\!|\!|_{W}$  &    Order \\ \hline
$\frac{1}{2}$&     8.3611E-03&        -&                     2.9900E-02&              - \\
$\frac{1}{4}$&     1.8531E-03&        2.1737&                1.1653E-02&              1.3594 \\
$\frac{1}{8}$&     4.7047E-04&        1.9778&                5.5779E-03&              1.0630 \\
$\frac{1}{16}$&    1.1930E-04&        1.9795&                2.7740E-03&              1.0077 \\
$\frac{1}{32}$&    2.9973E-05&        1.9929&                1.3862E-03&              1.0008 \\
$\frac{1}{64}$&    7.5036E-06&        1.9980&                6.9304E-04&              1.0001 \\
\hline
  \end{tabular}
  \end{center}
\end{table}

\subsubsection{Hybrid polygonal meshes}

\indent In this subsection, we partition the two-dimensional domain $\Omega=(0,1)^2$ into hybrid polygonal meshes which are shown as Figure \ref{polymesh}, where $N_h$ is the number of complete subdivisions on each boundary of $\Gamma$. Numerical tests are conducted with the time step $\tau = N_h^{-2}$. Table \ref{table5} and \ref{table6} render all errors and convergence results of optimal orders, which agree with our expectation.
\begin{figure}[!hbt]
\centering
\includegraphics[width=5.4cm,height=4cm]{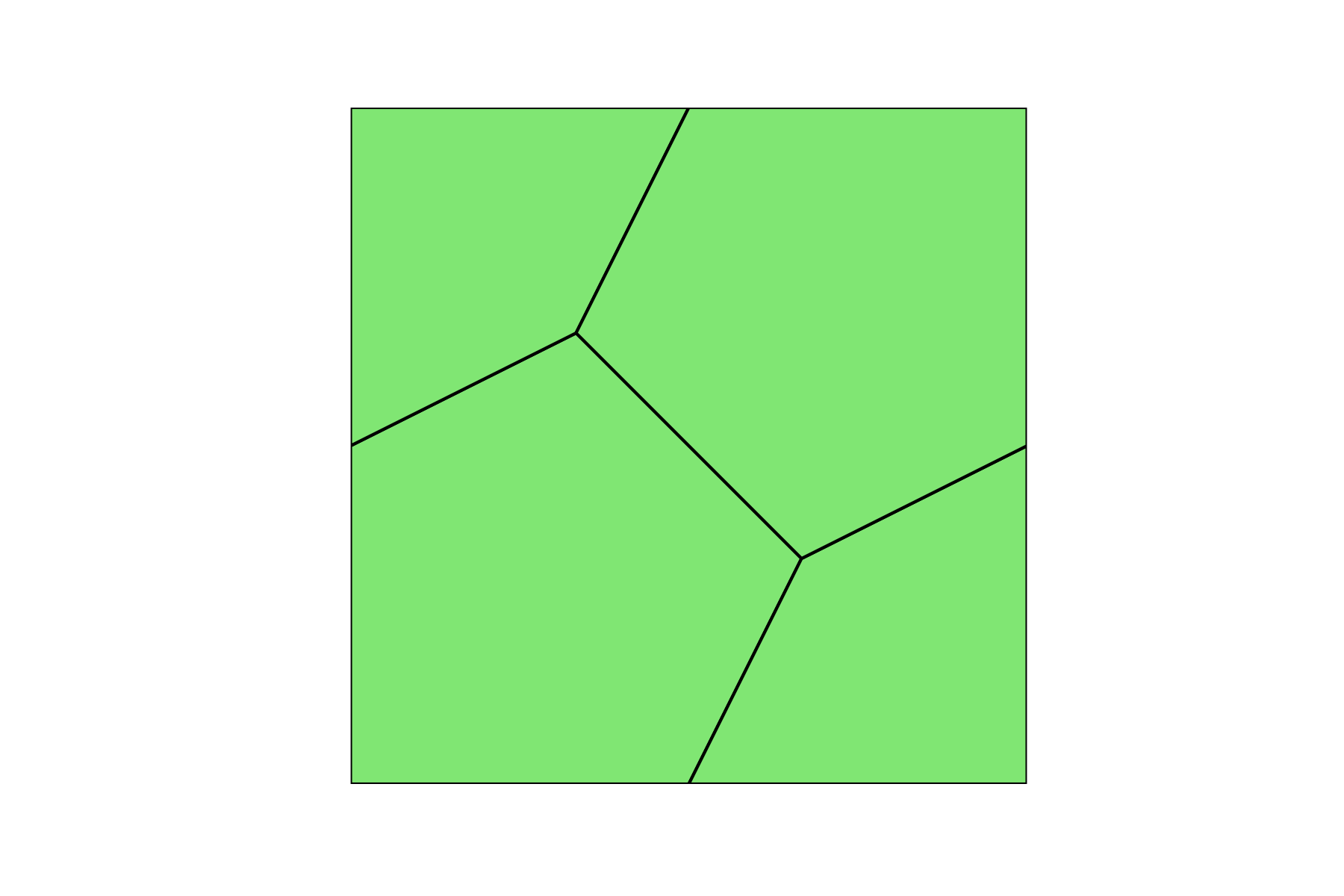}
\includegraphics[width=5.4cm,height=4cm]{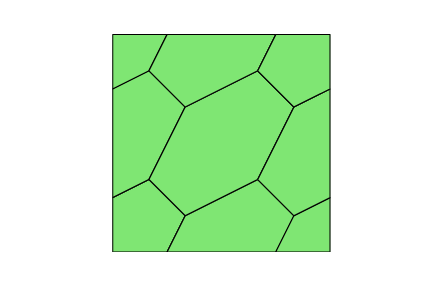}
\includegraphics[width=5.4cm,height=4cm]{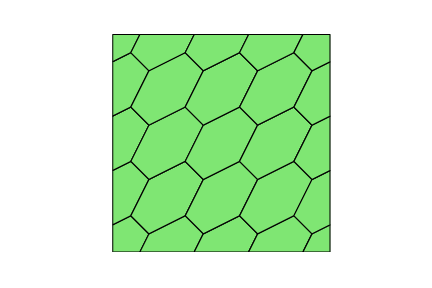}
\caption{Polygonal grids with $N_h=1, 2, 4$.}\label{polymesh}
\end{figure}
\begin{table}[!htbp]
\caption{WG error convergence orders for $\bm{u}$ with $\lambda=1$ and $\tau=N_h^{-2}$ on hybrid polygonal meshes}\label{table5}
  \begin{center}
  \begin{tabular}{ccccc}
   \hline
$N_h$&    $\|\bm{Q}_0\bm{u}(t_n)-\bm{u}_0^n\|$     &Order         & $|\!|\!|\bm{Q}_h\bm{u}(t_n)-\bm{u}_h^n|\!|\!|_{\bm{V}}$   & Order \\ \hline
2&        1.3017E-02&                   -           & 8.1600E-02                   & - \\
4&        2.6812E-03&                3.1095         & 3.2019E-02                   & 1.8411 \\
8&        4.3683E-04&                3.1545         & 1.0204E-02                   & 1.9882 \\
16&       6.1513E-05&                3.1400         & 2.8763E-03                   & 2.0282 \\
32&       8.2731E-06&                3.0607         & 7.6434E-04                   & 2.0218 \\
64&       1.1501E-06&                2.9303         & 1.9708E-04                   & 2.0129 \\
\hline
  \end{tabular}
  \end{center}
\end{table}
\begin{table}[!htbp]
\caption{WG error convergence orders for $p$ with $\lambda=1$ and $\tau=N_h^{-2}$ on hybrid polygonal meshes}\label{table6}
  \begin{center}
  \begin{tabular}{ccccc}
   \hline
$N_h$&   $\|Q_0p(t_n)-p_0^n\|$&        Order  &        $|\!|\!|Q_hp(t_n)-p_h^n|\!|\!|_{W}$  &    Order \\ \hline
2&     3.5652E-03&        -  &                1.7364E-02&          - \\	
4&     1.3776E-03&        1.8715&             9.5444E-03&          1.1778 \\
8&     4.1133E-04&        2.1013 &            5.0863E-03&          1.0942 \\
16&    1.1304E-04&        2.0690 &            2.6650E-03&          1.0353 \\
32&    2.9658E-05&        2.0412 &            1.3699E-03&          1.0152 \\
64&    7.5972E-06&        2.0226 &            6.9530E-04&          1.0071 \\
\hline
  \end{tabular}
  \end{center}
\end{table}

\subsection{Tests for locking-free when $\lambda\to\infty$}

\indent The aim of this subsection is to validate the locking-free property of our WG method. The right-hand side terms are chosen so that the exact solution is
\begin{align*}
\displaystyle\bm{u} &= \binom{\exp(-t)(\sin(2\pi y)(-1+\cos(2\pi x))+\frac{1}{\mu+\lambda}\sin(\pi x)\sin(\pi y))}{\exp(-t)(\sin(2\pi x)(1-\cos(2\pi y))+\frac{1}{\mu+\lambda}\sin(\pi x)\sin(\pi y))},\\
p &= \exp(-t)\sin(\pi x)\sin(\pi y).
\end{align*}
For the verification of locking-free of our method, we compare the WG method with the lowest order Taylor-Hood element, i.e., $[P_2]^2\times P_1$ element, for $\bm{u}$ and $p$ with the three choices of $\lambda=1, 10^4, 10^8$. We denote the finite element solution of $\bm{u}$ and $p$ by $\bm{u}_{hfem}^n$ and $p_{hfem}^n$, respectively. The error convergence rates on uniform triangulation are computed with the time step $\tau=h^2$.

Figure \ref{locking1}-\ref{locking8} depict the errors of our method and the conforming finite element method for the different $\lambda$. As can be observed from these figures, the locking problem does not affect the finite element approximation to $p$, but only the one to $\bm{u}$. As $\lambda$ goes to infinity, the orders of error convergence of finite element solution $\bm{u}_{hfem}^n$ degenerate from the optimal ones to the lower ones. While WG method has no significant fluctuations and keeps the optimal convergence orders during the entire change in $\lambda$, which clearly shows the advantage of our method.
\begin{figure}[!hbt]
\centering
\includegraphics[width=8.1cm,height=8cm]{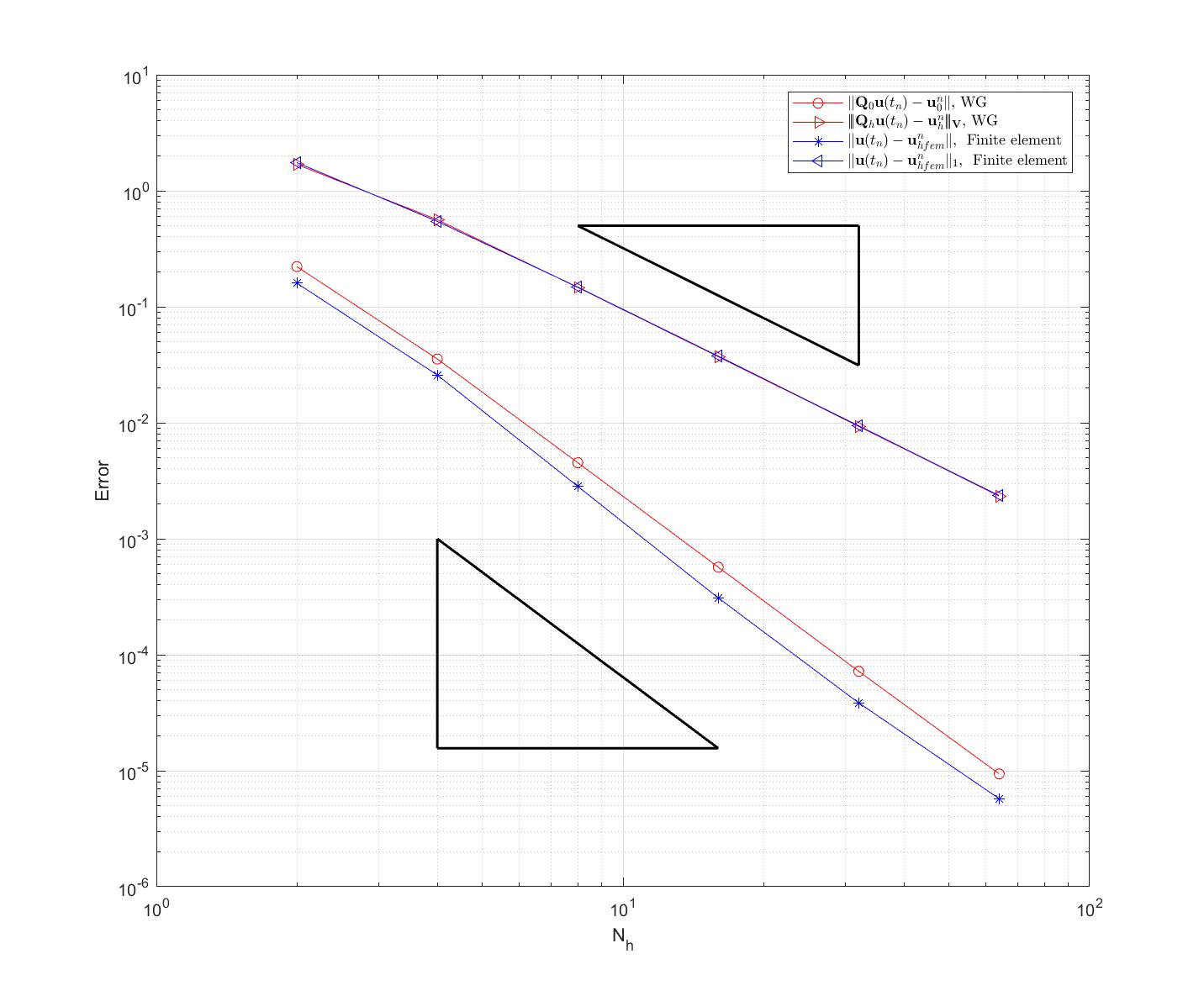}
\includegraphics[width=8.1cm,height=8cm]{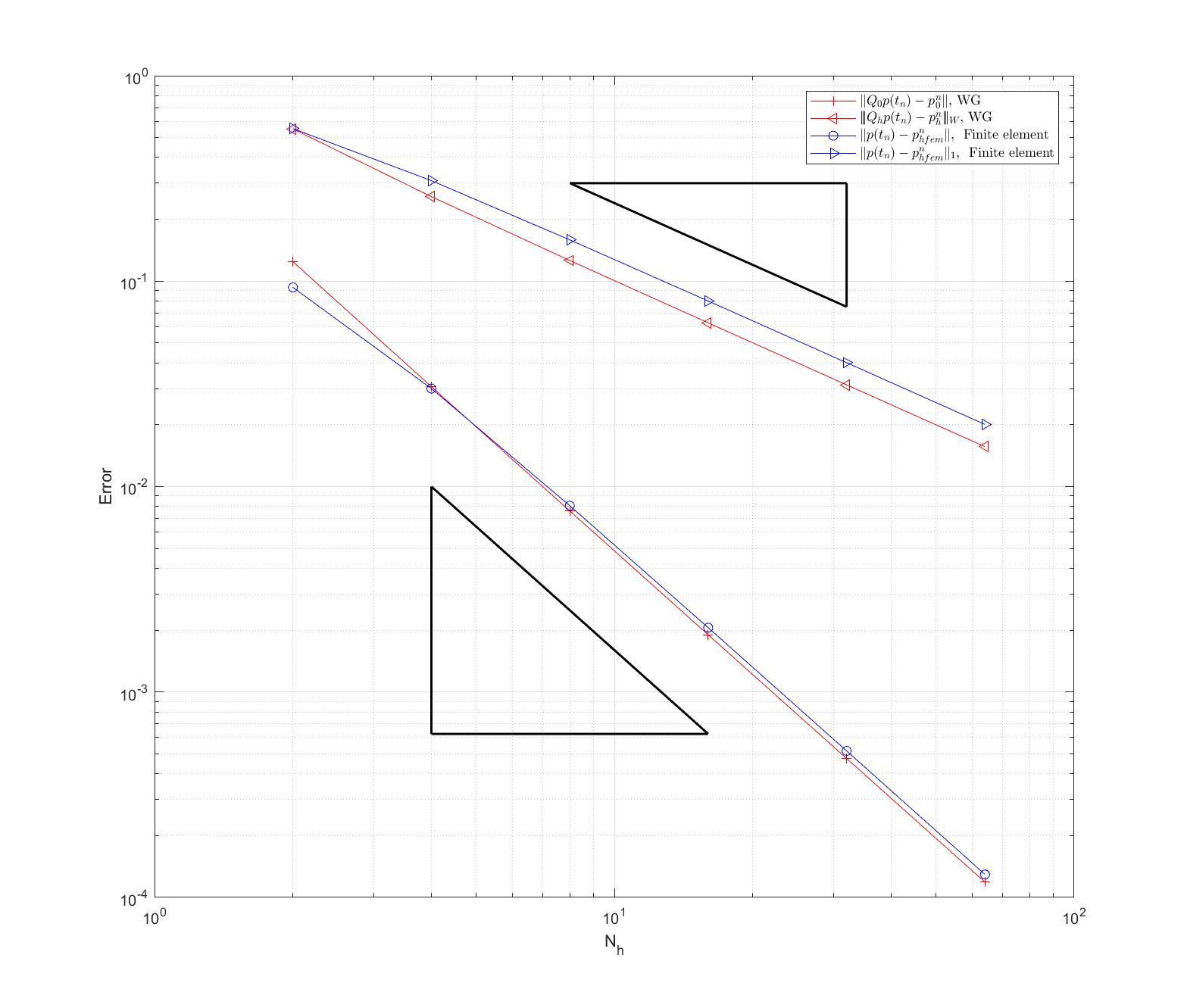}
\caption{Errors for $\bm{u}$ (Left) and $p$ (Right) with $\lambda=1$.}\label{locking1}
\end{figure}
\begin{figure}[!hbt]
\centering
\includegraphics[width=8.1cm,height=8cm]{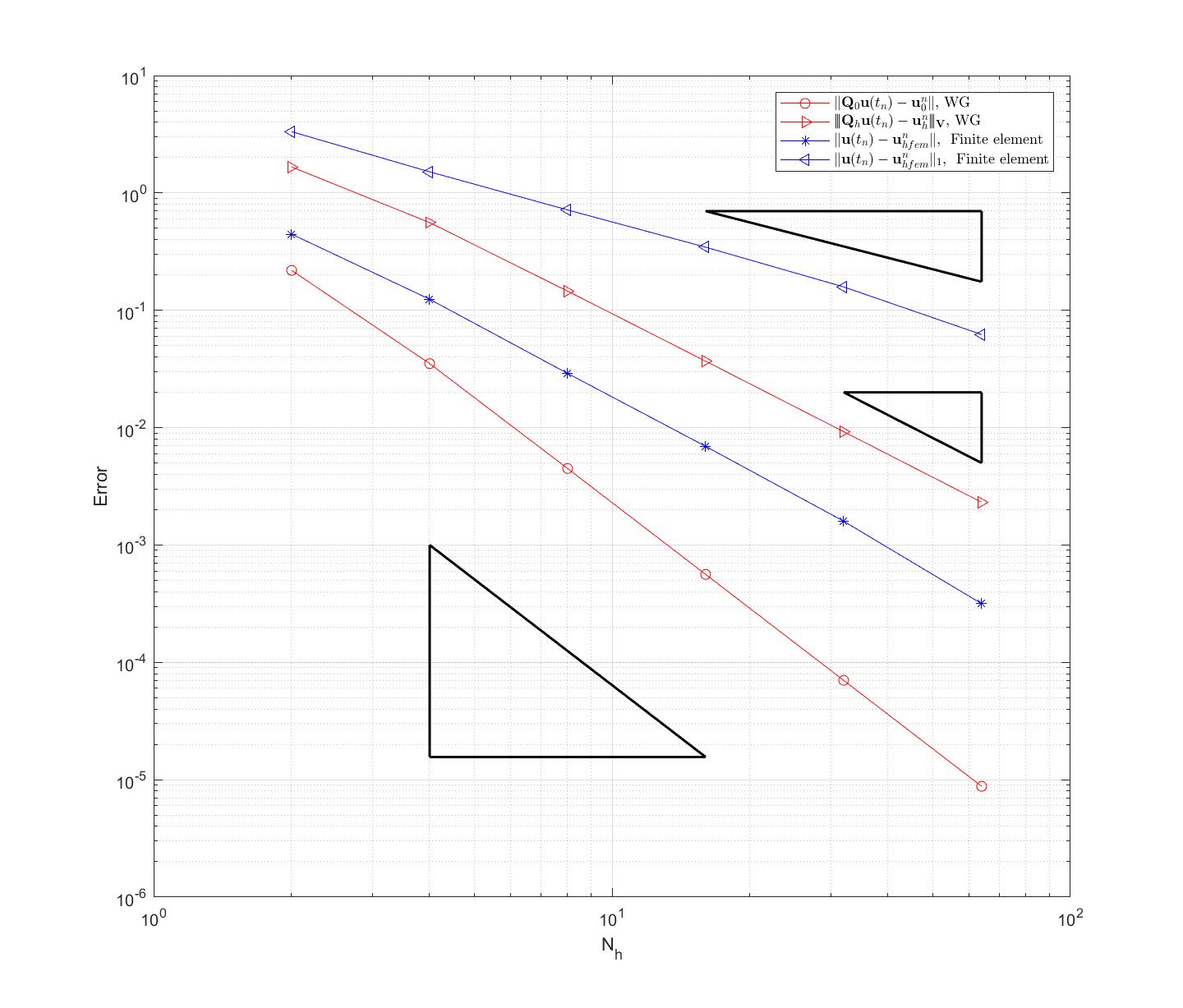}
\includegraphics[width=8.1cm,height=8cm]{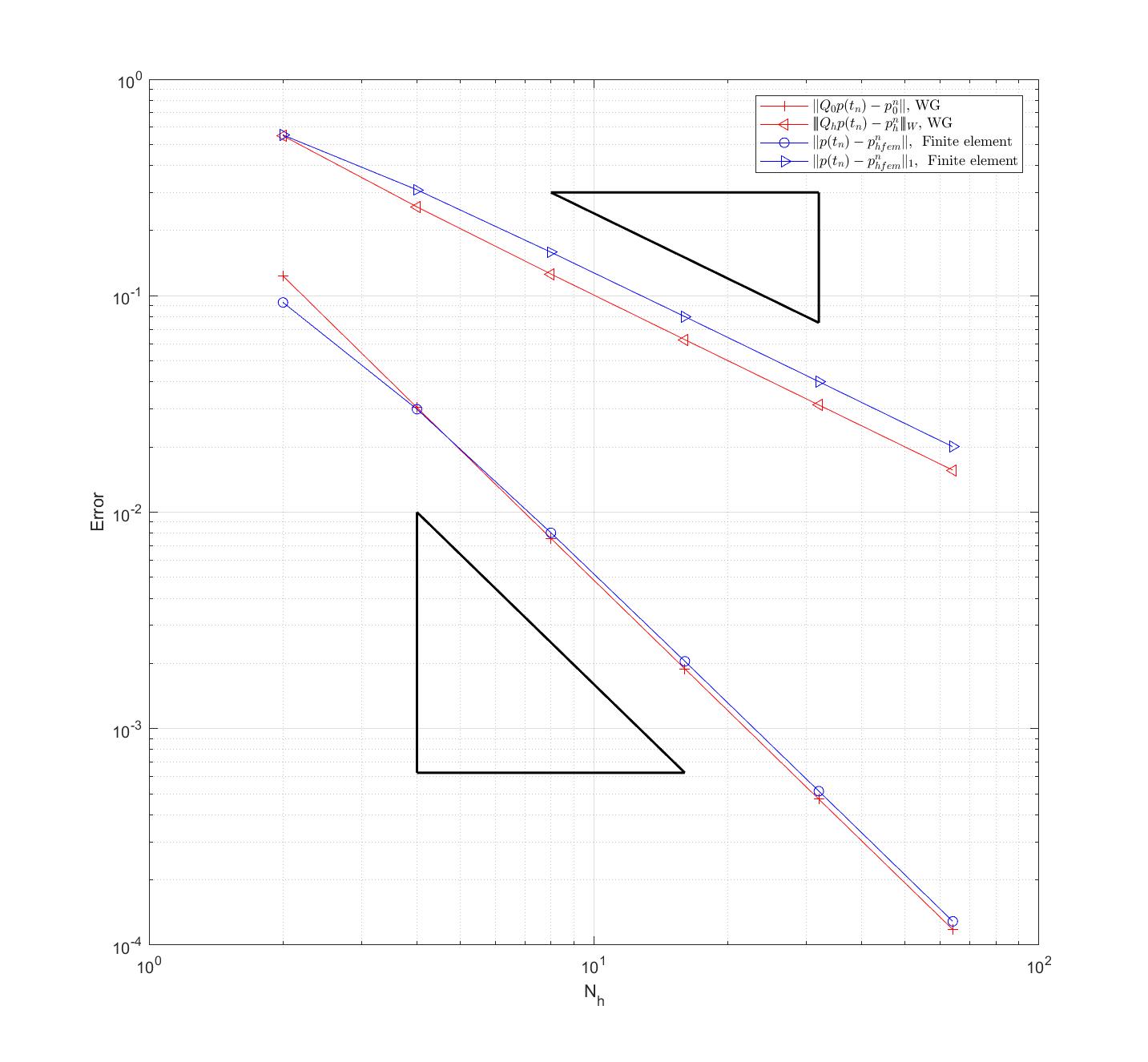}
\caption{Errors for $\bm{u}$ (Left) and $p$ (Right) with $\lambda=10^4$.}\label{locking4}
\end{figure}
\begin{figure}[!hbt]
\centering
\includegraphics[width=8.1cm,height=8cm]{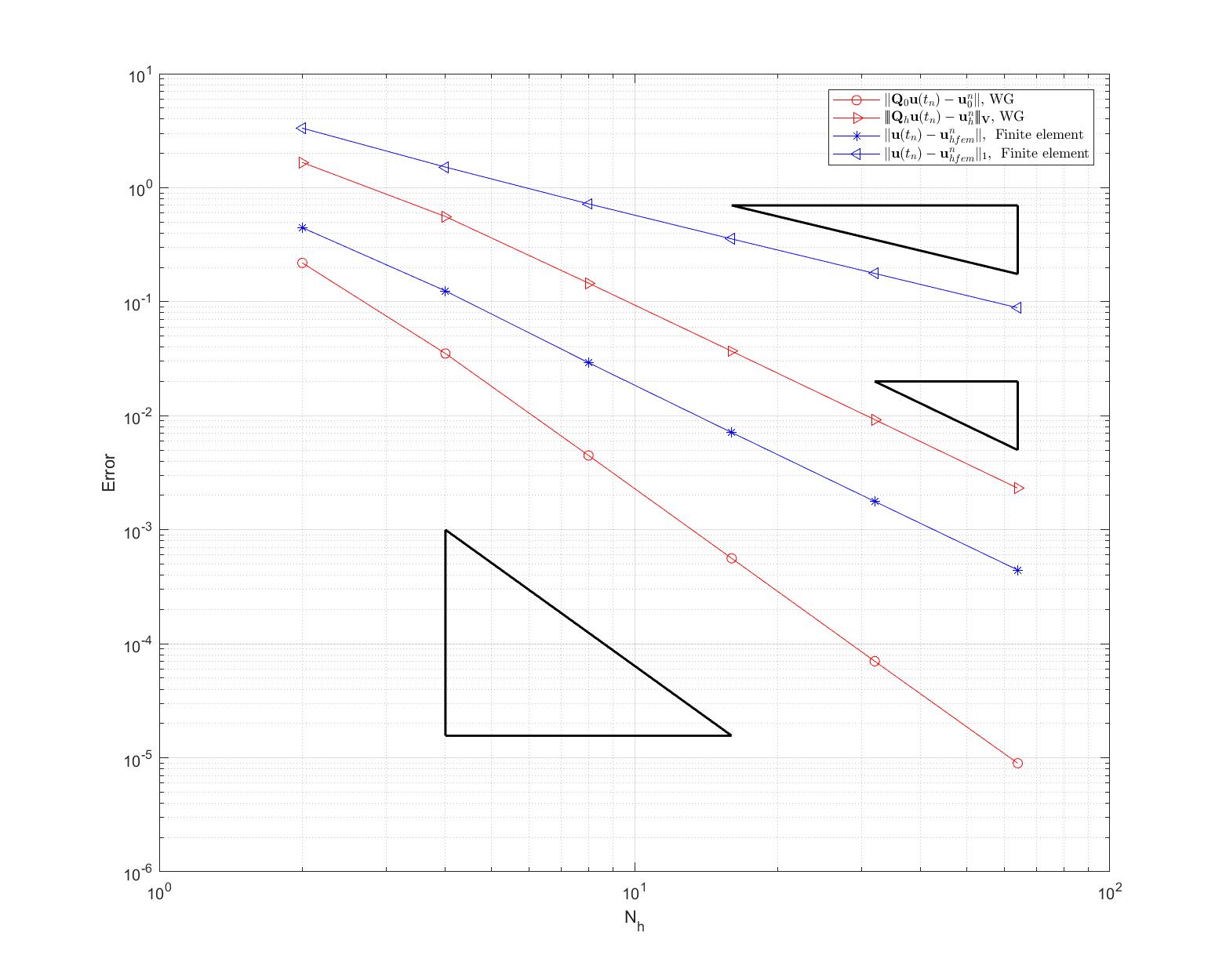}
\includegraphics[width=8.1cm,height=8cm]{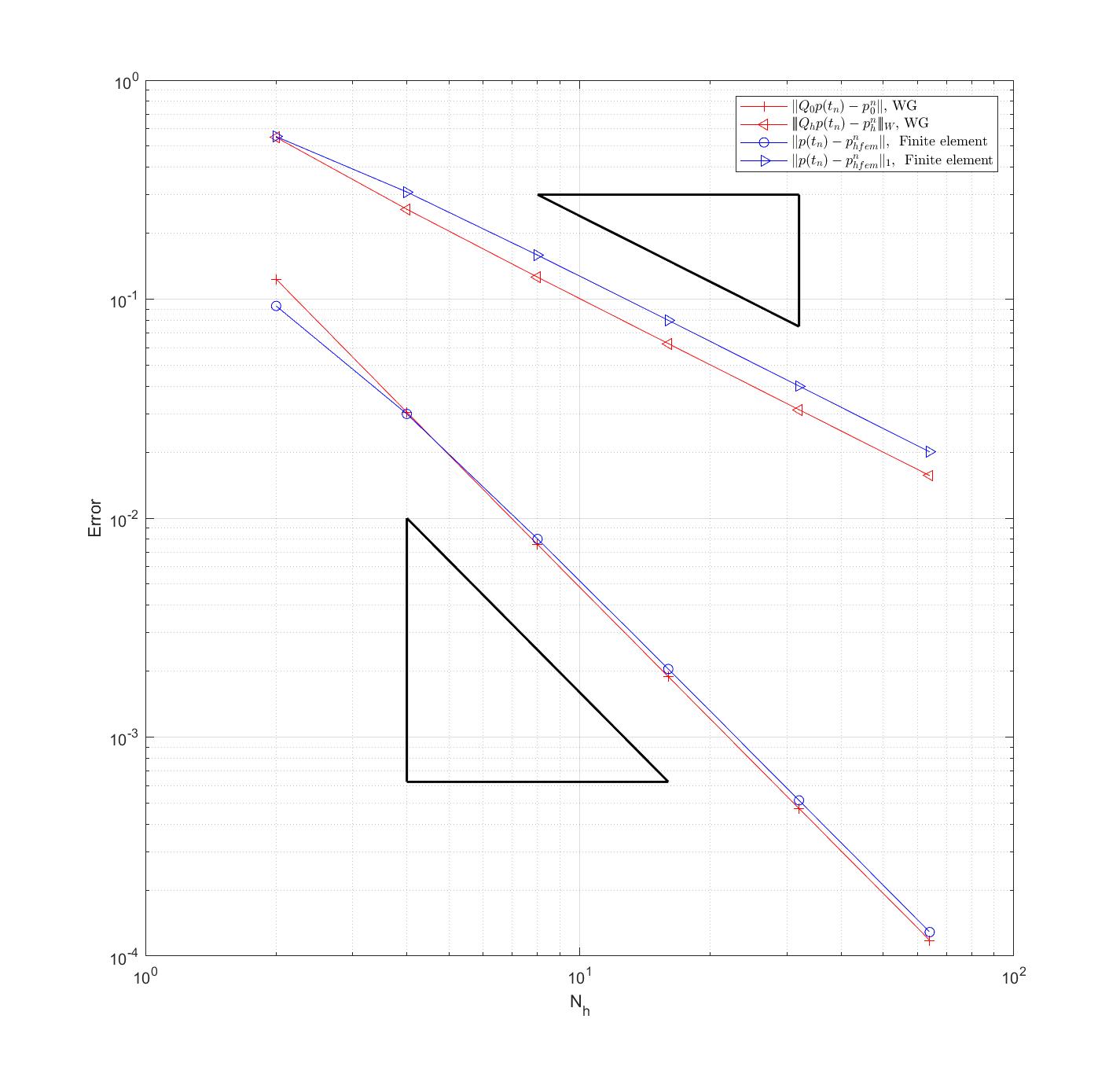}
\caption{Errors for $\bm{u}$ (Left) and $p$ (Right) with $\lambda=10^8$.}\label{locking8}
\end{figure}

\section*{Acknowledgments}
\indent This work was sponsored by the Research Foundation for Beijing University of Technology New Faculty Grant No. 006000514122516. The computations here were partly done on the high performance computers of State Key Laboratory of Scientific and Engineering Computing, Chinese Academy of Sciences. The authors sincerely thank Dr. Hui Peng for the discussion about mathematical models and their solving algorithms. \\

\section*{References}
\bibliographystyle{elsarticle-harv}
\bibliography{mybibfile}

\end{document}